\newtheorem{theorem}{Theorem}[section]
\theoremstyle{corollary}
\theoremstyle{proposition}
\newtheorem{proposition}[theorem]{Proposition}
\theoremstyle{definition}
\newtheorem{definition}[theorem]{Definition}
\theoremstyle{remark}
\numberwithin{equation}{section}
\newcommand{\comment}[1]{}
\begin{document}

\title [Hausdorff operators  in two weighted Herz-type Hardy spaces]{Two-weighted inequalities for Hausdorff operators
\\
in  Herz-type Hardy spaces}

\thanks{This paper is funded by Vietnam National Foundation for Science and Technology Development (NAFOSTED)}
\author{Nguyen Minh Chuong}
\address{Institute of mathematics, Vietnamese Academy of Science and Technology,
Hanoi, Vietnam}
\email{nmchuong@math.ac.vn}

\author{Dao Van Duong}
\address{School of Mathematics, Mientrung University of Civil Engineering, Phu Yen, Vietnam}
\email{daovanduong@muce.edu.vn}

\author{Kieu Huu Dung}
\address{School of Mathematics, University of Transport and Communications, Ha Noi, Vietnam}
\email{khdung@utc2.edu.vn}
\keywords{Hausdorff operator, Herz space, Herz-type Hardy space, $A_p$ weight, atom}
\subjclass[2010]{42B25, 42B99, 26D15}
\begin{abstract}
In this paper, we prove the  boundedness of matrix Hausdorff operators and rough Hausdorff operators in the two weighted Herz-type Hardy spaces  associated with both power weights and  Muckenhoupt weights. By applying the fact that the standard infinite atomic decomposition norm on two weighted Herz-type Hardy spaces is equivalent to the finite atomic norm on some dense subspaces of them, we generalize some previous known results due to Chen et al. \cite{CFL2012} and Ruan, Fan \cite{RF2016}.
\end{abstract}

\maketitle

\section{Introduction}
It is well known that the Hausdorff operator is one of important operators in harmonic analysis, and it is used to solve certain classical problems in analysis, especially it is closely related to the summability of the classical
Fourier series  (see, for instance, \cite{Andersen2}, \cite{BM}, \cite{CFL2012}, \cite{Liflyand1}, \cite{Liflyand2} and the references therein).  Let $\Phi$ be a locally integrable function on $\mathbb R^n$. The matrix Hausdorff operator $H_{\Phi,A}$ associated to the kernel function $\Phi$ is then defined in terms of the integral form as follows
\begin{equation}\label{Hausdorff1}
H_{\Phi, A}(f)(x)=\int\limits_{\mathbb R^n}{\frac{\Phi(y)}{|y|^n}f(A(y) x)dy},\,x\in\mathbb R^n,
\end{equation}
where $A(y)$ is an $n\times n$ invertible matrix for almost everywhere $y$ in the support of $\Phi$. It is worth pointing out that if the kernel function $\Phi$ is chosen appropriately, then the Hausdorff operator reduces to many classcial operators in analysis such as the Hardy operator, the Ces\`{a}ro operator, the Riemann-Liouville fractional integral operator and the Hardy-Littlewood average operator (see, e.g., \cite{Andersen1}, \cite{Chuong2016},  \cite{CDH2016}, \cite{Christ}, \cite{Dzherbashyan} \cite{FGLY2015}, \cite{Miyachi}, \cite{Moricz2005}, \cite{Xiao} and references therein).
\vskip 5pt
In  2012,  Chen,  Fan and  Li \cite{CFL2012} introduced the rough Hausdorff operator on $\mathbb R^n$.  More precisely, 
let $\Phi$ be a locally integrable and radial  function on $\mathbb R^n$ and $\Omega:S_{n-1}\longrightarrow\mathbb C $ be measurable functions such that $\Omega(y)\neq 0$ for almost everywhere $y$ in $S_{n-1}$. The rough Hausdorff operator ${\mathcal {H}}_{\Phi,\Omega}$  is then defined by
\begin{equation}\label{RoughHausdorf}
{\mathcal {H}}_{\Phi,\Omega}(f)(x)=\int_{\mathbb R^n}\dfrac{\Phi(x|y|^{-1})}{|y|^n}\Omega(y')f(y)dy,\;\;x\in\mathbb R^n,
\end{equation}
where $y'=\frac{y}{|y|}$.  Remark that using polar coordinates, we can rewrite 
\begin{equation}\label{RH1}
{\mathcal H}_{\Phi,\Omega}(f)(x)=\int_{0}^{\infty}\int_{S_{n-1}} \dfrac{\Phi(t)}{t}\Omega(y')f(t^{-1}|x|y')dy'dt,\;\;x\in\mathbb R^n.
\end{equation}
By choosing $\Omega=1$, we denote ${\mathcal H}_{\Phi}:={\mathcal H}_{\Phi,\Omega}$.
\vskip 5pt
Moreover,  it is interesting that Chuong, Duong and Dung \cite{CDD2017} introduced a general class of multilinear Hausdorff operators  defined by
\begin{equation}\label{mulHausdorff}
{H_{\Phi ,\vec{A} }}(\vec{f})(x) = \int\limits_{{\mathbb R^n}} {\frac{{\Phi (y)}}{{{{\left| y \right|}^n}}}} \prod\limits_{i = 1}^m {{f_i}} ({A_i}(y)x)dy,\,x\in\mathbb R^n,
\end{equation}
for $\vec{f}=\left(f_1, ..., f_m\right)$ and $\vec{A}=\left(A_1, ..., A_m\right)$. The authors gave necessary and sufficient conditions for the boundedness of ${H_{\Phi ,\vec{A} }}$ on the weighted Lebesgue, Herz, central Morrey and Morrey-Herz type spaces with variable exponent.
\vskip 5pt
It is well known that in recent years, the theory of  Hausdorff type operators  has been significantly developed into different contexts  (see \cite{Andersen2}, \cite{CFL2012}, \cite{CDH2016}, \cite{Christ}, \cite{FGLY2015}, \cite{Miyachi}, \cite{Moricz2005}, \cite{Tang}). Especially, the problem which establishes the boundedness for Hausdorff operators in the Hardy spaces is attractive to mathematicians. However, for all we know, there is no any work dealing with the study of the Hausdorff operators on the Hardy spaces $H^p(\mathbb R^n)$ for the case $n\geq 2$ and $0<p<1$.  Liflyand and Miyachi \cite{LM2009} even  showed that, in the case $n = 1$, there exists a bounded function $\Phi$ whose support is contained in $[a,b] \subset (0,\infty)$ such that the Hausdorff operator $H_{\Phi}$, which is defined by
\begin{equation}
H_{\Phi}f(x)=\int_{0}^{\infty}\frac{\Phi(y)}{y}f\left(\frac{x}{y}\right)dy, 
\end{equation}
 is not bounded on $H^p(\mathbb R)$ for any $0 < p < 1$. Thus, it is natural to find some other spaces that are the right substitutes to the Hardy spaces. Very recently, the authors of the papers \cite{CFL2012, RF2016} have showed that if the Hardy spaces are replaced by the Herz-type Hardy spaces, then the boundedness of the Hausdorff operators is solved.
\vskip 5pt
The theory of Hardy spaces associated with Herz spaces has developed in the past few years and played important roles in harmonic analysis, partial differential equation (see \cite{C1989}, \cite{CH1994}, \cite{LY1995}, \cite{LY1995(1)}, \cite{LY1996}, \cite{LY1997} for more details). These new Hardy spaces can be regarded as the local version at the origin of the classical Hardy spaces $H^p(\mathbb R^n)$ and are good substitutes for $H^p(\mathbb R^n)$ when we study the boundedness of non-translation invariant operators (see, for example, \cite{LY1995(2)}). From the results of Meyer \cite{MTW1985, MC1997}, Bownik \cite{B2005}, Yabuta \cite{Y1993},  Yang \cite{YZ2008, YZ2009}, Meda \cite{MSV2008} and Grafakos \cite{GLY2008}, the author of the paper \cite{Z2009} proved that the norms in two weighted Herz-type Hardy spaces ${H\mathop{K}\limits^{.}}^{\alpha, p}_{q}(\omega_1,\omega_2)$ can be achieved by finite central atomic decompositions in some dense subspaces of them. As an application, it is shown that if $T$ is a sublinear operator and maps all central $(\alpha, q, s,\omega_1,\omega_2)_0$-atoms into uniformly bounded elements of certain quasi-Banach space $\mathcal B$ for certain nonnegative integer $s\geq [\alpha-n(1-1/q)]$, then $T$ uniquely extends to a bounded sublinear operator from ${H\mathop{K}\limits^{.}}^{\alpha, p}_{q}(\omega_1,\omega_2)$ to $\mathcal B$.
\vskip 5pt
In this paper, by using above mentioned method which is quite different from the previous method \cite{RF2016}, we establish the sufficient conditions for  the boundedness of both matrix Hausdorff operators  ${{H}}_{\Phi,A}$ and rough Hausdorff operator ${\mathcal {H}}_{\Phi,\Omega}$ on the two-weighted homogeneous Herz-type Hardy spaces ${H\mathop{K}\limits^{.}}^{\alpha, p}_{q}(\omega_1,\omega_2)$.
\vskip 5pt
 Our paper is organized as follows. In Section \ref{2}, we present some notations and definitions of the homogeneous Herz spaces and the homogeneous Herz-type Hardy spaces associated with two weights. Our main theorems are given and proved in Section \ref{3} and Section \ref{4}.
\section{Some notations and definitions}\label{2}
Throught the whole paper, we denote by $C$ a positive geometric constant that is independent of the main parameters, but can change from line to line. We also write $a \lesssim b$ to mean that there is a positive constant $C$, independent of the main parameters, such that $a\leq Cb$.
\vskip 5pt
It is well known that the theory of $A_p$ weight was first introduced by Muckenhoupt \cite{Muckenhoupt1972} in the Euclidean spaces in order to characterise  the weighted $L^p$ boundedness of Hardy-Littlewood maximal functions. 
\begin{definition} Let $1 < p < \infty$. It is said that a weight $\omega \in A_p(\mathbb R^n)$ if there exists a constant $C$ such that for all balls $B\subset\mathbb R^n$,
$$\Big(\dfrac{1}{|B|}\int_{B}\omega(x)dx\Big)\Big(\dfrac{1}{|B|}\int_{B}\omega(x)^{-1/(p-1)}dx\Big)^{p-1}\leq C.$$
It is said that a weight $\omega\in A_1(\mathbb R^n)$ if there is a constant $C$ such that for all balls $B\subset\mathbb R^n$,
$$\dfrac{1}{|B|}\int_{B}\omega(x)dx\leq C\mathop{\rm essinf}\limits_{x\in B}\omega(x).$$
We denote $A_{\infty}(\mathbb R^n) = \bigcup\limits_{1\leq p<\infty}A_p(\mathbb R^n)$.
\end{definition}
\vskip 5pt
Remark that a close relation to $A_{\infty}(\mathbb R^n)$ is the reverse H\"{o}lder condition. If there exist $r > 1$ and a fixed constant $C$ such that
$\big(\frac{1}{|B|}\int_{B}\omega(x)^rdx\big)^{1/r}\leq \frac{C}{|B|}\int_{B}\omega(x)dx,$ for all balls $B \subset\mathbb R^n$, we then say that $\omega$ satisfies the reverse H\"{o}lder condition of order $r$ and write $\omega\in RH_r(\mathbb R^n)$.
\\
According to Theorem 19 and Corollary 21 in \cite{IMS2015}, $\omega\in A_{\infty}(\mathbb R^n)$ if and
only if there exists some $r > 1$ such that $\omega\in RH_r(\mathbb R^n)$. Moreover, if $\omega\in RH_r(\mathbb R^n)$, $r > 1$, then $\omega\in RH_{r+\varepsilon}(\mathbb R^n)$ for some $\varepsilon > 0$. We thus write $r_\omega \equiv {\rm sup}\{r > 1: \omega\in RH_r(\mathbb R^n)\}$ to denote the critical index of $\omega$ for the reverse H\"{o}lder condition. For further properties of $A_p$ weights, one may find in the book \cite{Stein1993}.
\vskip 5pt
\begin{proposition} The following statements are true:
\begin{itemize}
\item[(i)] $|x|^{\alpha}_h\in A_1(\mathbb R^n)$ if and only if $-n< \alpha\leq 0$;
\item[(ii)] $|x|^{\alpha}_h\in A_p(\mathbb R^n)$, $1 < p < \infty$, if and only if $-n < \alpha < n(p-1)$.
\end{itemize}
\end{proposition}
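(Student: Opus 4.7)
The plan is to verify the defining inequalities of $A_1$ and $A_p$ directly for $\omega(x)=|x|^\alpha$, splitting each ball $B=B(x_0,r)$ into two geometric regimes according to its position relative to the origin. The necessity parts are immediate from integrability; the sufficiency reduces to a short polar-coordinate computation.

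\textbf{Necessity.} Any $A_p$ weight (including the $A_1$ case) must be locally integrable, so integrating $|x|^\alpha$ on $B(0,1)$ forces $\alpha>-n$ in both (i) and (ii). For (ii), the dual weight $|x|^{-\alpha/(p-1)}$ must also be locally integrable, which yields $-\alpha/(p-1)>-n$, i.e.\ $\alpha<n(p-1)$. For (i), if $\alpha>0$ then on $B=B(0,r)$ we have $\mathop{\rm essinf}_B|x|^\alpha=0$ while $\frac{1}{|B|}\int_B|x|^\alpha dx>0$, contradicting the $A_1$ inequality. Hence $\alpha\le 0$ is necessary.

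\textbf{Sufficiency.} Fix $B=B(x_0,r)$ and distinguish two cases. If $|x_0|\ge 2r$, then $|x|\sim|x_0|$ uniformly on $B$, so $|x|^\alpha$ is essentially constant there and both the $A_1$ and $A_p$ conditions are trivial, with constant depending only on $n$, $\alpha$, $p$. If $|x_0|<2r$, then $B\subset B(0,3r)$ and polar coordinates give, for any $\beta>-n$,
$$\int_{B(0,3r)}|x|^\beta\,dx=\frac{\sigma(S^{n-1})}{n+\beta}(3r)^{n+\beta}.$$
Under $-n<\alpha<n(p-1)$ we apply this with $\beta=\alpha$ and $\beta=-\alpha/(p-1)$, both admissible, and obtain
$$\Bigl(\frac{1}{|B|}\int_B|x|^\alpha\,dx\Bigr)\Bigl(\frac{1}{|B|}\int_B|x|^{-\alpha/(p-1)}\,dx\Bigr)^{p-1}\lesssim r^\alpha\cdot r^{-\alpha}=1,$$
proving (ii). For (i), the containment $B\subset B(0,3r)$ gives $\frac{1}{|B|}\int_B|x|^\alpha dx\lesssim r^\alpha$; meanwhile, since $\alpha\le 0$, $|x|^\alpha\ge(3r)^\alpha=3^\alpha r^\alpha$ a.e.\ on $B$, so $\mathop{\rm essinf}_B|x|^\alpha\gtrsim r^\alpha$, yielding the $A_1$ inequality.

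\textbf{Main obstacle.} There is no serious difficulty; the only care needed is in the near-origin case, where one must combine the inclusion $B(x_0,r)\subset B(0,3r)$ with the sharp matching of powers of $r$ produced by the polar integration—the factor $r^\alpha$ coming from $|x|^\alpha$ cancels exactly with the factor $r^{-\alpha}$ coming from $(|x|^{-\alpha/(p-1)})^{p-1}$. The stated ranges of $\alpha$ correspond precisely to the joint local integrability of the weight and of its $A_p$-dual near the origin, together with the obstruction in (i) that $|x|^\alpha$ becomes arbitrarily small near $0$ when $\alpha>0$.
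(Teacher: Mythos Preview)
Your argument is correct and is the standard textbook proof of this classical fact about power weights. Note, however, that the paper does not actually supply a proof of this proposition: it is recorded in Section~2 as a known preliminary result, with no argument given. So there is no ``paper's own proof'' to compare your approach against; you have simply filled in the well-known justification that the authors chose to omit.
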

Let us give the following standard properties of $A_p$ weights which are used in the sequel.
\begin{proposition}\label{pro2.3DFan}
Let $\omega\in A_p(\mathbb R^n) \cap RH_r(\mathbb R^n)$, $p\geq 1$ and $r > 1$. Then there exist constants $C_1, C_2 > 0$ such that
$
C_1\big(\frac{|E|}{|B|}\big)^p\leq \frac{\omega(E)}{\omega(B)}\leq C_2\big(\frac{|E|}{|B|}\big)^{\frac{(r-1)}{r}},$ for any measurable subset $E$  of a ball $B$.
\end{proposition}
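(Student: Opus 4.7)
The plan is to establish the two inequalities separately, each one drawing on exactly one of the two hypotheses. The reverse H\"{o}lder condition $\omega \in RH_r$ will yield the upper bound on $\omega(E)/\omega(B)$, while the $A_p$ condition (in two flavours, depending on whether $p=1$ or $p>1$) will yield the lower bound. Both halves amount to a carefully exponent-balanced application of H\"{o}lder's inequality, together with the observation that the averaged form of each defining condition can be converted to a raw integral inequality by absorbing appropriate powers of $|B|$.

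For the upper estimate I would write $\omega(E) = \int_B \chi_E(x)\,\omega(x)\,dx$ and apply H\"{o}lder's inequality with conjugate exponents $r$ and $r/(r-1)$ to obtain
\[
\omega(E) \le \Bigl(\int_B \omega(x)^r\,dx\Bigr)^{1/r} |E|^{(r-1)/r}.
\]
Substituting the reverse H\"{o}lder bound $\bigl(\tfrac{1}{|B|}\int_B \omega^r\bigr)^{1/r} \le \tfrac{C}{|B|}\int_B \omega$ and collecting the resulting powers of $|B|$ then yields $\omega(E) \le C\,\omega(B)\bigl(|E|/|B|\bigr)^{(r-1)/r}$, which is exactly the right-hand inequality.

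For the lower estimate when $p>1$ I would apply H\"{o}lder to the trivial identity $|E| = \int_E \omega(x)^{1/p}\,\omega(x)^{-1/p}\,dx$ with exponents $p$ and $p/(p-1)$, giving
\[
|E|^p \le \omega(E)\Bigl(\int_B \omega(x)^{-1/(p-1)}\,dx\Bigr)^{p-1}.
\]
The defining $A_p$ inequality then supplies $\bigl(\int_B \omega^{-1/(p-1)}\bigr)^{p-1} \le C\,|B|^p/\omega(B)$, and rearranging produces $\omega(E)/\omega(B) \ge C^{-1}(|E|/|B|)^p$. In the boundary case $p=1$, the $A_1$ condition gives the pointwise lower bound $\omega(x) \ge C^{-1}\omega(B)/|B|$ for almost every $x\in B$, and integrating this inequality over $E$ completes the argument.

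No genuine obstacle is expected: both halves are routine manipulations built from H\"{o}lder's inequality and the two defining conditions. The only points requiring care are the book-keeping of the factors of $|B|$ when converting the averaged forms of the $A_p$ and $RH_r$ conditions into their raw integral counterparts, and the separate handling of the $p=1$ case whose $A_1$ definition is qualitatively different (an essinf rather than a dual integral).
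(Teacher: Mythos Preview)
Your argument is correct and is precisely the standard proof of this classical fact about Muckenhoupt weights. Both halves are exactly as you describe: H\"{o}lder with exponents $(r,r')$ combined with the $RH_r$ bound gives the upper inequality, and H\"{o}lder with exponents $(p,p')$ applied to $|E|=\int_E \omega^{1/p}\omega^{-1/p}$ combined with the $A_p$ bound gives the lower inequality, with the $p=1$ case handled separately via the essential-infimum formulation.

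Note, however, that the paper does not supply its own proof of this proposition. It is introduced as one of the ``standard properties of $A_p$ weights which are used in the sequel'' and is simply stated without argument, as a known background result (it appears, for instance, in the references on weight theory such as Stein's book). So there is no paper proof to compare against; your write-up is the expected textbook derivation and would serve perfectly well as the omitted justification.
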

\begin{proposition}\label{pro2.4DFan}
If $\omega\in A_p(\mathbb R^n)$, $1 \leq p < \infty$, then for any $f\in L^1_{\rm loc}(\mathbb R^n)$ and any ball $B \subset \mathbb R^n$, we have
$
\frac{1}{|B|}\int_{B}|f(x)|dx\leq C\big(\frac{1}{\omega(B)}\int_{B}|f(x)|^p\omega(x)dx\big)^{\frac{1}{p}}.
$
\end{proposition}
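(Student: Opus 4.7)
The proof is a classical application of Hölder's inequality combined with the $A_p$ condition, and I would handle the two cases $p=1$ and $p>1$ separately.

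For the case $p=1$, the plan is to exploit the essential-infimum formulation of $A_1$ directly. Since $\omega\in A_1$, one has $\omega(B)/|B|\leq C\,\mathop{\rm essinf}_{x\in B}\omega(x)$, which is equivalent to $1/\omega(x)\leq C|B|/\omega(B)$ for almost every $x\in B$. Writing $|f(x)|=|f(x)|\omega(x)\cdot\omega(x)^{-1}$ and integrating over $B$ then yields the desired inequality after dividing by $|B|$.

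For the case $p>1$, my strategy is to split $|f|=|f|\omega^{1/p}\cdot\omega^{-1/p}$ and apply Hölder's inequality with exponents $p$ and $p'=p/(p-1)$. This produces
\begin{equation*}
\int_B|f(x)|\,dx\leq\Bigl(\int_B|f(x)|^p\omega(x)\,dx\Bigr)^{1/p}\Bigl(\int_B\omega(x)^{-1/(p-1)}\,dx\Bigr)^{(p-1)/p}.
\end{equation*}
The key step is then to invoke the $A_p$ condition, which gives $\bigl(\int_B\omega^{-1/(p-1)}\bigr)^{p-1}\leq C|B|^p/\omega(B)$, so that the second factor above is bounded by $C^{1/p}|B|/\omega(B)^{1/p}$. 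Dividing through by $|B|$ produces exactly the stated inequality.

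No step here should present a serious obstacle; the main point is simply to organize the $A_1$ case (which uses the pointwise essinf bound) separately from the $p>1$ case (which uses the symmetric integral form), and to be careful with the exponents $p'/p=1/(p-1)$ when rewriting $\omega^{-p'/p}$. The constant $C$ in the conclusion depends only on the $A_p$-constant of $\omega$, not on $f$ or $B$.
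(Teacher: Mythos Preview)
Your proof is correct and entirely standard: splitting into the $p=1$ case (using the essinf formulation of $A_1$) and the $p>1$ case (H\"older with exponents $p,p'$ followed by the $A_p$ condition) is exactly how this inequality is established. The paper itself does not supply a proof for this proposition; it is stated there as one of the ``standard properties of $A_p$ weights'' and quoted without argument, so there is no paper proof to compare against---your write-up would serve perfectly well as one.
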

%%%%%%%%%%%%%%%%%%%%%%%%%%%%%%%%
%%%%%%%%%%%%%%%%%%%%%%%%%%%%%%%
As usual, the weighted function $\omega$ is a non-negative measurable function on $\mathbb R^n$. Let $L^q(\omega)$ $(0<q<\infty)$ be the space of all measurable functions $f$ on $\mathbb{R}^n$ such that $\|f\|_{L^q(\omega)}=\big(\int_{\mathbb{R}^n}|f(x)|^q\omega(x)dx \big)^{\frac{1}{q}}<\infty.$
\\
The space $L^q_\text {loc}(\omega)$ is defined as the set of all measurable functions $f$ on $\mathbb R^n$ satisfying $\int_{K}|f(x)|^q\omega(x)dx<\infty$ for any compact subset $K$ of $\mathbb R^n$. The space $L^q_\text {loc}(\omega, \mathbb R^n\setminus\{0\})$ is also defined in a similar way to the space  $L^q_\text {loc}(\omega)$.
\vskip 5pt
In what follows, we denote $\chi_k=\chi_{C_k}$, $C_k=B_k\setminus B_{k-1}$ for all $k\in\mathbb Z$, where $B_R = \big\{x\in \mathbb R^n: |x|\leq 2^R\big\}$ and $B_R^* = \big\{x\in \mathbb R^n: |x|\leq R\big\}$ for all $R\in\mathbb R$. Denote by $\omega(K)$ the integral $\int_{K}\omega(x)dx$ for all subsets $K$ of $\mathbb R^n$.
\vskip 5pt
Now, we are in a position to give some notations and definitions of  the homogeneous  two weighted Herz spaces and the homogeneous two weighted Herz-type Hardy spaces.
\begin{definition} Let $0<\alpha<\infty, 1\leq q<\infty$, $0<p <\infty$, and let $\omega_1$ and $\omega_2$ be weighted functions. Then the homogeneous two weighted Herz space  ${\mathop{K}\limits^{.}}_{q}^{\alpha,p}(\omega_1,\omega_2)$ is defined as the set of all measurable functions $f\in L^q_{\rm loc}(\omega_2,\mathbb R^n\setminus \{0\})$  such that $\|f\|_{{\mathop{K}\limits^{.}}_{q}^{\alpha,p}(\omega_1,\omega_2)}<\infty$,
where
$$
\|f\|_{{\mathop{K}\limits^{.}}_{q}^{\alpha,p}(\omega_1,\omega_2)}=\Big( \sum\limits_{k=-\infty}^{\infty} \omega_1(B_k)^{\alpha p/n}\|f\chi_k\|_{L^q(\omega_2)}^{p}\Big)^{1/p}.
$$
\end{definition}
%%%%%%%%%%%%%%%%%%%%%%%%%%%%%%%%%%%%%%%%
%%%%%%%%%%%%%%%%%%%
%%%%%%%%%%%%%%%%%%%%%
Denote $\mathbb Z_{+}=\mathbb N\cup\lbrace0\rbrace$. Let $S(\mathbb R^n)$ be the space of Schwartz functions, and denote by $S'(\mathbb R^n)$ the dual space of $S(\mathbb R^n)$. Given $N\in\mathbb N$, we denote
$$
S_N(\mathbb R^n)=\lbrace \phi\in S(\mathbb R^n): \|\phi\|_{m,\beta}\leq 1, m\leq n+N,|\beta|\leq N\rbrace,
$$
where $\|\phi\|_{m,\beta}=\mathop{\rm sup}\limits_{x\in\mathbb R^n} (1+|x|^m)|D^{\beta}\phi(x)|,\beta=(\beta_1, ..., \beta_n), D^{\beta}\phi=(\frac{\partial}{\partial x_1})^{\beta_1}...(\frac{\partial}{\partial x_n})^{\beta_n}\phi.$ Next, the grand maximal function of $f\in S'(\mathbb R^n)$ \cite{FS1972} is defined by
$$
G_N(f)(x)=\mathop{\rm sup}\limits_{\phi\in S_N}M_\phi(f)(x), \;\;x\in\mathbb R^n,
$$ 
where $M_\phi(f)(x)=\mathop{\rm sup}\limits_{|y-x|<t}|\phi_t*f(y)|$ and $\phi_t(x)=t^{-n}\phi(t^{-1}x)$ for all $t>0$. Let us recall the definition of the Hardy spaces associated to the two weighted Herz spaces due to Lu and Yang \cite{LY1995(1)} as follows.
\begin{definition}
Let $0<\alpha<\infty, 1\leq q <\infty, 0<p<\infty$, $N={\rm max}\lbrace [\alpha-n(1-1/q)]+1,1\rbrace$ and $\omega_1,\omega_2\in A_1$. The homogeneous two weighted Herz-type Hardy space $H{\mathop{K}\limits^.}^{\alpha, p}_q(\omega_1,\omega_2)$ is defined as the set of all $f\in S'(\mathbb R^n)$ such that
$
\|f\|_{H{\mathop{K}\limits^.}^{\alpha, p}_q(\omega_1,\omega_2)}=\|G_N(f)\|_{{\mathop{K}\limits^.}^{\alpha, p}_q(\omega_1,\omega_2)}<\infty.
$
\end{definition}
Now we state the definition of central atom and dyadic central unit. Note that we denote the integer part of real number $x$ by $[x]$.
\begin{definition}
Let $1<q<\infty$, $\alpha\in[n(1-1/q),\infty)$, $s\geq [\alpha-n(1-1/q)]$ and $\omega_1,\omega_2\in A_1$.  A function $a$ on $\mathbb R^n$ is called a central $(\alpha, q, s; \omega_1, \omega_2)_0$-atom if it satisfies that
\begin{itemize}
\item[(i)] ${\rm supp}a\subset B(0,r)$ for some $r>0$;
\\
\item[(ii)] $\|a\|_{L^q(\omega_2)}\leq \omega_1(B(0,r))^{\frac{-\alpha}{n}}$;
\\
\item[(iii)] $\int_{\mathbb R^n}a(x)x^{\beta}dx=0$ for all $|\beta|\leq s$;
\\
\item[(iv)] $a(x)=0$ on some neighborhood of $0$.
\end{itemize}
 A function $a$ on $\mathbb R^n$ is called a dyadic central $(\alpha, q; \omega_1, \omega_2)$-unit if it satisfies $(\rm i)$ and $(\rm ii)$ associated to $r=2^{k}$ for some $k\in\mathbb Z$.
\end{definition}
%%%%%%%%%%%%%%%%
%%%%%%%%%%%%%%%%%%
\begin{theorem}{\rm (Theorem 1.1 in \cite{LY1995})}\label{blockHerz}
Let $0 <\alpha < \infty, 0 < p <\infty, 1\leq q <\infty$. Let $\omega\in A_1(\mathbb R^n)$ and $\omega_2$ be a weighted function on $\mathbb R^n$. We then have  $f\in {\mathop{K}\limits^{.}}^{\alpha,p}_{q}(\omega_1,\omega_2)$  if and only if
\[
f=\sum\limits_{k=-\infty}^{\infty}\lambda_k b_k\;\;\;\,\,\textit{pointwise},
\]
where $\sum\limits_{k=-\infty}^{\infty}{|\lambda_k|^p}<\infty$, and each $b_k$ is a dyadic central $(\alpha, q, \omega_1, \omega_2)$- unit with the support in $B_k$. Moreover,
$
\|f\|_{{\mathop{K}\limits^{.}}^{\alpha,p}_{q}(\omega_1,\omega_2)}\simeq \textit{\rm inf}\Big\{\Big(\sum\limits_{k=-\infty}^{\infty}|\lambda_k|^p\Big)\Big\}^{\frac{1}{p}},
$where the infimum is taken over all decompositions of f as above.
\end{theorem}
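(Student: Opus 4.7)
The plan is to prove the two directions of the characterization separately, with the nontrivial estimate being the ``if'' direction (atomic bound to norm bound), and the ``only if'' direction being an essentially tautological canonical decomposition.

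\emph{``Only if'' direction.} Given $f\in {\mathop{K}\limits^{.}}^{\alpha,p}_{q}(\omega_1,\omega_2)$, I would set $\lambda_k:=\omega_1(B_k)^{\alpha/n}\|f\chi_k\|_{L^q(\omega_2)}$ and $b_k:=\lambda_k^{-1}f\chi_k$ (with $b_k\equiv 0$ when $\lambda_k=0$). Then $\mathrm{supp}(b_k)\subset C_k\subset B_k$ and $\|b_k\|_{L^q(\omega_2)}=\omega_1(B_k)^{-\alpha/n}$, so $b_k$ is a dyadic central $(\alpha,q;\omega_1,\omega_2)$-unit. Since the annuli $C_k$ partition $\mathbb R^n\setminus\{0\}$, the series $\sum_k\lambda_k b_k=\sum_k f\chi_k$ equals $f$ pointwise, and by construction $\sum_k|\lambda_k|^p=\|f\|_{{\mathop{K}\limits^{.}}^{\alpha,p}_{q}(\omega_1,\omega_2)}^p$.

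\emph{``If'' direction.} Suppose $f=\sum_{k\in\mathbb Z}\lambda_k b_k$ with each $b_k$ supported in $B_k$ and $\|b_k\|_{L^q(\omega_2)}\leq\omega_1(B_k)^{-\alpha/n}$. Because $\mathrm{supp}(b_k)\subset B_k$, the product $b_k\chi_j$ vanishes whenever $j>k$, so
\[
\|f\chi_j\|_{L^q(\omega_2)}\leq\sum_{k\geq j}|\lambda_k|\,\|b_k\|_{L^q(\omega_2)}\leq\sum_{k\geq j}|\lambda_k|\,\omega_1(B_k)^{-\alpha/n}.
\]
The decisive ingredient is the geometric decay of the ratios $\omega_1(B_j)/\omega_1(B_k)$ for $j\le k$: since $\omega_1\in A_1\subset A_\infty$, Proposition \ref{pro2.3DFan} applied to $B_j\subset B_k$ (together with $|B_j|/|B_k|=2^{n(j-k)}$) yields
\[
\bigl(\omega_1(B_j)/\omega_1(B_k)\bigr)^{\alpha/n}\leq C\,2^{\alpha\delta(j-k)}\qquad(j\leq k)
\]
for some $\delta>0$ (the reverse-Hölder exponent rewritten). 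Multiplying the previous inequality by $\omega_1(B_j)^{\alpha/n}$ gives
\[
\omega_1(B_j)^{\alpha/n}\|f\chi_j\|_{L^q(\omega_2)}\leq C\sum_{k\geq j}|\lambda_k|\,2^{\alpha\delta(j-k)}.
\]

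\emph{Summation in $j$.} I would now raise to the $p$-th power and sum on $j$, splitting into two cases. When $0<p\leq 1$, the subadditivity $(\sum a_k)^p\leq\sum a_k^p$ and Fubini give $\sum_j\leq C\sum_k|\lambda_k|^p\sum_{j\leq k}2^{p\alpha\delta(j-k)}\leq C\sum_k|\lambda_k|^p$. When $p>1$, I would apply Hölder's inequality to the convolution-type sum, splitting the weight $2^{\alpha\delta(j-k)}$ as $2^{\alpha\delta(j-k)/p}\cdot 2^{\alpha\delta(j-k)/p'}$, so that the $\ell^{p'}$ factor is a convergent geometric series and the remaining $\ell^p$ factor again collapses under Fubini to $C\sum_k|\lambda_k|^p$. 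Taking infimum over all admissible decompositions, combined with the other direction, produces the asserted equivalence $\|f\|_{{\mathop{K}\limits^{.}}^{\alpha,p}_{q}(\omega_1,\omega_2)}\simeq\inf\bigl(\sum_k|\lambda_k|^p\bigr)^{1/p}$.

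The main technical obstacle is obtaining the geometric factor $2^{\alpha\delta(j-k)}$ with strictly positive exponent; this is exactly where the $A_1$ (equivalently, reverse-Hölder) assumption on $\omega_1$ is indispensable, since without a quantitative doubling estimate of this form the double sum over $j,k$ would diverge and the atomic characterization would fail.
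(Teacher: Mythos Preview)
Your argument is correct and is essentially the standard proof of this decomposition theorem. Note, however, that the present paper does not actually prove this statement: it is quoted verbatim as Theorem~1.1 of Lu--Yang \cite{LY1995} and used as a black box. So there is no ``paper's own proof'' to compare against here; the original argument in \cite{LY1995} proceeds along the same lines you outline --- the canonical annular decomposition for one direction, and for the other the reverse-H\"older bound on $\omega_1$ to produce the geometric factor $2^{\alpha\delta(j-k)}$, followed by the usual $\ell^p$-convolution estimate (subadditivity for $p\le 1$, H\"older for $p>1$). Your identification of the $A_1$/reverse-H\"older hypothesis as the crucial ingredient is exactly right.
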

%%%%%%%%%%%%%%%
%%%%%%%%%%%%%%
Next, we present the useful result due to Zhou in \cite{Z2009} which states that the norms in $H{\mathop{K}\limits^{.}}^{\alpha,p}_{q}(\omega_1,\omega_2)$ can be achieved by finite central atomic decomposition in some dense subspaces of them.

Let $0<p<\infty$, $1<q<\infty$, $\alpha\in [n(1-1/q),\infty)$, $s\geq [\alpha-n(1-1/q)]$ and $\omega_1,\omega_2\in A_1$. Denote by ${\mathop{F}\limits^{.}}^{\alpha, q, s}_p(\omega_1,\omega_2)$ the collection of all finite linear combinations of central $(\alpha, q, s; \omega_1, \omega_2)_0$-atoms. Then for $f\in {\mathop{F}\limits^{.}}^{\alpha, q, s}_p(\omega_1,\omega_2)$, we define
\begin{align}\label{Fcentral}
\|f\|_{{\mathop{F}\limits^{.}}^{\alpha, q, s}_p(\omega_1,\omega_2)} = &{\rm inf}\Big\lbrace \big(\sum\limits_{j=1}^m |\lambda_j|^p\big)^{\frac{1}{p}}:m\in\mathbb N, f=\sum\limits_{j=1}^m \lambda_ja_j,\nonumber
\\
\;\;\;\;\;\;\;\;\;\;\;\;\;\;\;\;\;\;\;\;\,\;\;\;\;\;\;\;\;\;\;\;\;\;\;\;& \lbrace a_j\rbrace_{j=1}^m\,\textit{\rm are central}(\alpha, q, s; \omega_1, \omega_2)_0-\,\textit{\rm atoms}\Big\rbrace.
\end{align}
Let $C{\mathop{F}\limits^{.}}^{\alpha, q, s}_p(\omega_1,\omega_2)$ be the collection of all finite linear combinations of $\mathcal C^{\infty}(\mathbb R^n)$ central $(\alpha, q, s; \omega_1,\omega_2)_0$-atoms. Similarly,  for $f\in C{\mathop{F}\limits^{.}}^{\alpha, q, s}_p(\omega_1,\omega_2)$, we also define $\|f\|_{C{\mathop{F}\limits^{.}}^{\alpha, q, s}_p(\omega_1,\omega_2)}$ as in (\ref{Fcentral}) just replacing central $(\alpha, q, s; \omega_1,\omega_2)_0$-atoms by $\mathcal C^{\infty}(\mathbb R^n)$ central $(\alpha, q, s; \omega_1, \omega_2)_0$-atoms.
%%%%%%%%%%%%%%%%%%%%%%%
%%%%%%%%%%%%%%%%%%%%%%%
\begin{theorem}{\rm (Theorem 1 in \cite{Z2009})}\label{equiv}
Let $0<p<\infty$, $1<q<\infty$, $\alpha\in[n(1-1/q),\infty)$ and non-negative integer $s\geq [\alpha-n(1-1/q)]$. Then we have $\|\cdot\|_{H{\mathop{K}\limits^{.}}^{\alpha, p}_q(\omega_1,\omega_2)}$ and $\|\cdot\|_{{\mathop{F}\limits^{.}}^{\alpha, q, s}_p(\omega_1,\omega_2)}$\big(resp. $\|\cdot\|_{C{\mathop{F}\limits^{.}}^{\alpha, q, s}_p(\omega_1,\omega_2)}$\big) are equivalent on ${\mathop{F}\limits^{.}}^{\alpha, q, s}_p(\omega_1,\omega_2)$ \big(resp. $C{\mathop{F}\limits^{.}}^{\alpha, q, s}_p(\omega_1,\omega_2)$\big).
\end{theorem}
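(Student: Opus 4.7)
The equivalence has two directions. The direction $\|f\|_{H{\mathop{K}\limits^{.}}^{\alpha,p}_q(\omega_1,\omega_2)}\lesssim\|f\|_{{\mathop{F}\limits^{.}}^{\alpha,q,s}_p(\omega_1,\omega_2)}$ is immediate: every finite central atomic combination is a fortiori an infinite one, so the Hardy--Herz analogue of Theorem~\ref{blockHerz} (the atomic decomposition for $H{\mathop{K}\limits^{.}}^{\alpha,p}_q$ from \cite{LY1995(1)}) yields the bound at once, and the identical remark covers $\|\cdot\|_{C{\mathop{F}\limits^{.}}^{\alpha,q,s}_p(\omega_1,\omega_2)}$ since $\mathcal{C}^\infty$ central atoms are central atoms.

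For the converse I would normalize $\|f\|_{H{\mathop{K}\limits^{.}}^{\alpha,p}_q(\omega_1,\omega_2)}=1$ and note that, since $f\in{\mathop{F}\limits^{.}}^{\alpha,q,s}_p(\omega_1,\omega_2)$ is a finite combination of central atoms vanishing near the origin, $\mathrm{supp}(f)\subset B(0,2^{k_0})\setminus B(0,2^{k_1})$ for some integers $k_1\le k_0$. Apply the infinite Hardy--Herz atomic decomposition to write $f=\sum_{k\in\mathbb{Z}}\lambda_k a_k$ with each $a_k$ a central $(\alpha,q,s;\omega_1,\omega_2)_0$-atom supported in $B(0,2^k)$ and $\bigl(\sum_k|\lambda_k|^p\bigr)^{1/p}\lesssim 1$. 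Fix a large integer $N$ and split $f = f_N + g_N$, where $f_N = \sum_{k_1-N\le k\le k_0}\lambda_k a_k$ is a finite sub-sum whose ${\mathop{F}\limits^{.}}^{\alpha,q,s}_p$-norm is trivially $\lesssim 1$, and $g_N = f - f_N$ is the residue, which, by the support of $f$ and $f_N$, is supported in $B(0,2^{k_0})$, vanishes in a neighborhood of $0$ (intersection of the finitely many such neighborhoods of the atoms in $f_N$ with that of $f$), and inherits vanishing moments of order $s$.

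The heart of the argument is to exhibit $g_N$ as a bounded multiple of a single central $(\alpha,q,s;\omega_1,\omega_2)_0$-atom supported in $B(0,2^{k_0})$, uniformly in $N$. For this I would estimate $\|g_N\|_{L^q(\omega_2)}$ by summing $|\lambda_k|\,\omega_1(B(0,2^k))^{-\alpha/n}$ over the two tails $k>k_0$ and $k<k_1-N$, apply Proposition~\ref{pro2.3DFan} to the $A_1$ weight $\omega_1$ to compare the $\omega_1$-measures by a geometric factor in $2^{k-k_0}$, and sum the resulting geometric series by combining $\bigl(\sum_k|\lambda_k|^p\bigr)^{1/p}\lesssim 1$ with the embedding $\ell^p\hookrightarrow\ell^1$ (when $p\le 1$) or H\"older's inequality (when $p>1$). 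This shows $g_N$ is a constant multiple of a central atom, so $f=f_N+g_N$ is the desired finite central atomic decomposition. The principal obstacle is precisely this bundling step: keeping the geometric series convergent requires the compatibility conditions on $\alpha,p,q,s$ built into the statement, and certifying that the bundled tail is a \emph{genuine} central $(\alpha,q,s;\omega_1,\omega_2)_0$-atom means simultaneously checking the size, support, vanishing-at-origin, and vanishing-moment conditions. For the smooth variant $C{\mathop{F}\limits^{.}}^{\alpha,q,s}_p(\omega_1,\omega_2)$, the same scheme applies after replacing the bundled tail atom by a mollification, which stays a $\mathcal{C}^\infty$ central atom because it is supported in $B(0,2^{k_0})$ away from the origin.
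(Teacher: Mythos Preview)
The paper does not contain a proof of this statement: Theorem~\ref{equiv} is quoted as ``Theorem~1 in \cite{Z2009}'' and used as a black box, with the actual argument residing in Zhou's paper. Consequently there is no in-paper proof against which to compare your proposal.

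That said, your sketch follows the standard strategy for such finite-atomic equivalence results, and the easy direction is fine. For the converse, the principal gap is in the handling of the low-index tail $\sum_{k<k_1-N}\lambda_k a_k$. These atoms satisfy $\|a_k\|_{L^q(\omega_2)}\le\omega_1(B_k)^{-\alpha/n}$, and since $k<k_1\le k_0$ the factor $\omega_1(B_k)^{-\alpha/n}$ is \emph{large} relative to the target $\omega_1(B_{k_0})^{-\alpha/n}$; the geometric comparison via Proposition~\ref{pro2.3DFan} therefore goes the wrong way, and a naive sum of the size bounds diverges rather than converges. To repair this you would need either (a) to argue that the atoms $a_k$ with $k<k_1$ actually vanish --- which requires invoking the specific \emph{construction} of the atomic decomposition in \cite{LY1995(1)}, where atoms are built from the restrictions of $f$ to dyadic annuli so that $f\equiv 0$ on $B_{k_1}$ forces $a_k=0$ for small $k$ --- or (b) to control the low tail through $\|f\chi_{B_{k_1-N}}\|_{L^q(\omega_2)}=0$ directly rather than by summing atom norms. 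Either fix is available, but neither is the argument you wrote. A secondary issue: the mollification step for the $C{\mathop{F}\limits^{.}}^{\alpha,q,s}_p$ variant does not automatically preserve both the vanishing-at-origin condition (iv) and the vanishing moments (iii); one typically needs a compactly supported mollifier together with a moment-correction argument, so that step is also more delicate than your sketch indicates.
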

To end this section, let us recall that a quasi-Banach space $\mathcal B$ is a vector space endowed with a quasi-norm $\|\cdot\|_{\mathcal B}$ which is nonnegative, non-degenerate, homogeneous, and obeys the quasi-triangle inequality. Let $p\in (0,1]$. A quasi-Banach space $\mathcal B_p$ with a quasi-norm $|\cdot\|_{\mathcal B_p}$ is said to be a $p$-quasi-Banach space if $\|f+g\|_{\mathcal B_p}^{p}\leq \|f\|_{\mathcal B_p}^p+\|g\|_{\mathcal B_p}^p$, for any $f, g\in \mathcal B_p$.
\vskip 5pt
Recall that for any given $r$-quasi-Banach space $\mathcal B_r$ with $r \in (0, 1]$ and linear space $X$, an operator $T$ from $X$ to $B_r$ is called to be $B_r$-sublinear if for any $f, g \in X$
and $\lambda, \nu \in\mathbb C$, we have
$$
\|T(\lambda f+\nu g)\|_{\mathcal B_r}\leq (|\lambda|^r\|T(f)\|_{\mathcal B_r}^r + |\nu|^r\|T(g)\|_{\mathcal B_r}^r)^{\frac{1}{r}},
$$
 and
$$\|T(f)-T(g)\|_{\mathcal B_r}\leq \|T(f-g)\|_{\mathcal B_r}.$$
%%%%%%%%%%%%%%
%%%%%%%%%%%%%%%%
Let us give the following useful which is used in the sequel.
\begin{theorem}{\rm (Theorem 2 in \cite{Z2009})}\label{apply}
Let $0<p\leq 1$, $p\leq r\leq 1$, $1<q<\infty$, $\alpha\in [n(1-1/q),\infty)$ and nonnegative $s\geq [\alpha-n(1-1/q)]$. If $T$ is a $\mathcal B_r$-sublinear operator defined on ${\mathop{F}\limits^.}_p ^{\alpha, q, s}(\omega_1, \omega_2)$ such that
$$
S ={\rm sup}\big\lbrace \|Ta\|_{\mathcal B_r}: a\,\textit{ is any central}\, (\alpha, q, s; \omega_1,\omega_2)_0\,\textit{ -atom}\big\rbrace <\infty
$$
or defined on $C{\mathop{F}\limits^.}_p ^{\alpha, q, s}(\omega_1, \omega_2)$   such that
$$
S ={\rm sup}\big\lbrace \|Ta\|_{\mathcal B_r}: a\,\textit{ is any }\, \mathcal C^{\infty}(\mathbb R^n)\,\textit{ central}\, (\alpha, q, s; \omega_1,\omega_2)_0\,\textit{-atom}\big\rbrace <\infty,
$$
then $T$ uniquely extends to be a bounded $\mathcal B_r$-sublinear operator from $H{\mathop{K}\limits^{.}}^{\alpha,p}_{q}(\omega_1,\omega_2)$ to $\mathcal B_r$.
\end{theorem}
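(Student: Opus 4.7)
The plan is to combine three ingredients: the quasi-triangle inequality built into the $\mathcal B_r$-sublinearity hypothesis, the norm equivalence supplied by Theorem \ref{equiv}, and density of the finite atomic subspaces in $H{\mathop{K}\limits^{.}}^{\alpha,p}_q(\omega_1,\omega_2)$. The whole argument is obtained by first bounding $T$ on the finite atomic space with respect to the Hardy quasi-norm, and then extending by continuity.

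First, I would show that $T$ is bounded from ${\mathop{F}\limits^{.}}^{\alpha,q,s}_p(\omega_1,\omega_2)$, equipped with the quasi-norm (\ref{Fcentral}), into $\mathcal B_r$. Fix $f$ in this space together with any finite representation $f=\sum_{j=1}^{m}\lambda_j a_j$ by central $(\alpha,q,s;\omega_1,\omega_2)_0$-atoms. Iterating the $\mathcal B_r$-sublinearity hypothesis gives
$$
\|T(f)\|_{\mathcal B_r}^{r}\;\le\;\sum_{j=1}^{m}|\lambda_j|^{r}\,\|T(a_j)\|_{\mathcal B_r}^{r}\;\le\; S^{r}\sum_{j=1}^{m}|\lambda_j|^{r}.
$$
Since $0<p\le r\le 1$, the monotonicity of $\ell^{p}$-quasi-norms yields $(\sum_j|\lambda_j|^{r})^{1/r}\le(\sum_j|\lambda_j|^{p})^{1/p}$. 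Taking the infimum over admissible representations gives $\|T(f)\|_{\mathcal B_r}\le S\,\|f\|_{{\mathop{F}\limits^{.}}^{\alpha,q,s}_p(\omega_1,\omega_2)}$, and Theorem \ref{equiv} then upgrades this to
$$
\|T(f)\|_{\mathcal B_r}\;\lesssim\; S\,\|f\|_{H{\mathop{K}\limits^{.}}^{\alpha,p}_q(\omega_1,\omega_2)}
$$
for every $f\in {\mathop{F}\limits^{.}}^{\alpha,q,s}_p(\omega_1,\omega_2)$. The $C{\mathop{F}\limits^{.}}^{\alpha,q,s}_p(\omega_1,\omega_2)$ alternative is handled identically.

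Next, I would extend $T$ by continuity. Given $f\in H{\mathop{K}\limits^{.}}^{\alpha,p}_q(\omega_1,\omega_2)$, the infinite atomic decomposition theorem underlying this Hardy space produces $f=\sum_{k\in\mathbb Z}\lambda_k a_k$ with $(\sum_k|\lambda_k|^{p})^{1/p}\simeq \|f\|_{H{\mathop{K}\limits^{.}}^{\alpha,p}_q(\omega_1,\omega_2)}$. Truncating to $f_N=\sum_{|k|\le N}\lambda_k a_k\in{\mathop{F}\limits^{.}}^{\alpha,q,s}_p(\omega_1,\omega_2)$ and applying the same atomic estimate to the tail yields $\|f-f_N\|_{H{\mathop{K}\limits^{.}}^{\alpha,p}_q(\omega_1,\omega_2)}\to 0$. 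Combining this with the first step and the second sublinearity inequality $\|T(f_N)-T(f_M)\|_{\mathcal B_r}\le \|T(f_N-f_M)\|_{\mathcal B_r}$, the sequence $\{T(f_N)\}$ is Cauchy in $\mathcal B_r$; its limit defines $T(f)$, which is independent of the chosen decomposition (again by sublinearity) and obeys the claimed bound.

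I expect the density step to be the main obstacle. Concretely, one must verify that the truncations converge in the Hardy quasi-norm rather than merely in $\mathcal S'$, which demands a tail estimate of the form $\big\|G_N\big(\sum_{|k|>N}\lambda_k a_k\big)\big\|_{{\mathop{K}\limits^{.}}^{\alpha,p}_q(\omega_1,\omega_2)}\lesssim\big(\sum_{|k|>N}|\lambda_k|^{p}\big)^{1/p}$. This is the precise place where the restrictions $\alpha\ge n(1-1/q)$ and $s\ge[\alpha-n(1-1/q)]$ become essential: the vanishing-moment condition on atoms, combined with the $A_1$-weight estimates of Propositions \ref{pro2.3DFan} and \ref{pro2.4DFan}, delivers the needed pointwise decay of $G_N(a_k)$ off its supporting ball, and summing in ${\mathop{K}\limits^{.}}^{\alpha,p}_q$ recovers the required tail control.
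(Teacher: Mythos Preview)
The paper does not supply a proof of this theorem: it is quoted verbatim as Theorem~2 of \cite{Z2009} and used as a black box in the subsequent sections. There is therefore no ``paper's own proof'' to compare against.

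That said, your outline is exactly the standard argument behind such extension principles and matches what one expects in \cite{Z2009}. The first step---iterating $\mathcal B_r$-sublinearity over a finite atomic representation, using $p\le r$ to pass from $\ell^r$ to $\ell^p$, and invoking Theorem~\ref{equiv}---is clean and correct. For the second step, one caution: the infinite atomic decomposition for $H{\mathop{K}\limits^{.}}^{\alpha,p}_q(\omega_1,\omega_2)$ typically produces central atoms that need not satisfy condition~(iv) (vanishing near the origin), so the partial sums $f_N$ are not automatically in ${\mathop{F}\limits^{.}}^{\alpha,q,s}_p(\omega_1,\omega_2)$ as you wrote. The density of ${\mathop{F}\limits^{.}}^{\alpha,q,s}_p(\omega_1,\omega_2)$ (resp.\ $C{\mathop{F}\limits^{.}}^{\alpha,q,s}_p(\omega_1,\omega_2)$) in the Hardy space is precisely the nontrivial content established in \cite{Z2009} alongside Theorem~\ref{equiv}; once that is granted, your Cauchy-sequence extension goes through verbatim.
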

%%%%%%%%%%%%%%%%%%%%%%%%%%%%%%%%%%%%%%%
%%%%%%%%%%%%%%%%%%%%%%%%%%%%%%%%%%%%%%%%%%
\section{The main results about the boundedness of ${\mathcal{H}}_{\Phi,\Omega}$}\label{3}
 
%Let us recall that the rough Hausdorff operator is defined by
%$$ {\mathcal {H}}_{\Phi,\Omega}(f)(x)=\int_{\mathbb R^n}\dfrac{\Phi(x|y|^{-1})}{|y|^n}\Omega(y')f(y)dy,\;\;x\in\mathbb R^n, $$
%where $y'=\frac{y}{|y|}$, $\Phi$ is a locally integrable and radial  function on $\mathbb R^n$ and $\Omega:S_{n-1}\longrightarrow\mathbb C $ is a measurable function such that $\Omega(y)\neq 0$ for almost everywhere $y$ in $S_{n-1}$. We write ${\mathcal {H}}_{\Phi}:={\mathcal {H}}_{\Phi,\Omega}$ for $\Omega=1$.
%\vskip 5pt 
Our first main result is the following.
\begin{theorem}\label{Hardy-Hardy}
Let $0<p\leq 1$, $1<q<\infty$, $n(1-1/q)\leq\alpha<1+n(1-1/q)$, and $\omega_1=|x|^{\beta_1}$, $\omega_2=|x|^{\beta_2}$ with $\beta_1,\beta_2\in (-n,0]$. If  $\Phi$ is a radial function and there exist $m, M\in \mathbb Z$ such that ${\rm supp}(\Phi)\subset \lbrace x\in\mathbb R^n: 2^m< |x|\leq 2^M \rbrace$, then $\mathcal H_{\Phi}$ is a bounded operator from $H{\mathop{K}\limits^{.}}^{\alpha,p}_{q}(\omega_1,\omega_2)$ into itself. 
\end{theorem}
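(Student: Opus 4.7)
The plan is to invoke Theorem \ref{apply} with target quasi-Banach space $\mathcal{B}_r = H\dot{K}^{\alpha,p}_q(\omega_1,\omega_2)$ and $r=p \in (0,1]$. Since $\mathcal{H}_\Phi$ is linear and $H\dot{K}^{\alpha,p}_q(\omega_1,\omega_2)$ carries a $p$-quasi-triangle inequality (standard for atomic Hardy-type spaces with $0<p\le 1$), the sublinearity hypothesis of Theorem \ref{apply} is automatic. The task therefore reduces to producing a uniform bound $\|\mathcal{H}_\Phi a\|_{H\dot{K}^{\alpha,p}_q(\omega_1,\omega_2)} \leq C$ for every central $(\alpha,q,s;\omega_1,\omega_2)_0$-atom $a$. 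The hypothesis $n(1-1/q) \leq \alpha < 1 + n(1-1/q)$ forces $[\alpha - n(1-1/q)] = 0$, so we may take $s=0$; consequently $a$ satisfies only the single cancellation $\int_{\mathbb{R}^n} a(x)\,dx = 0$ in addition to its support and size conditions.

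The central observation is that the annular support of $\Phi$ makes $\mathcal{H}_\Phi a$ behave like a rescaled central atom. If $\mathrm{supp}\,a \subset B(0,r_0)$ and $a$ vanishes on $B(0,r_1)$, then from the requirement $2^m < |x|/|y| \leq 2^M$ for the kernel $\Phi(x/|y|)$ to be nonzero one reads off that $\mathrm{supp}\,\mathcal{H}_\Phi a \subset B(0, 2^M r_0)$ and $\mathcal{H}_\Phi a \equiv 0$ on $B(0, 2^m r_1)$. For the single moment condition, Fubini's theorem together with the substitution $u = x/|y|$ gives
\[
\int_{\mathbb{R}^n} \mathcal{H}_\Phi a(x)\,dx = \Big(\int_{\mathbb{R}^n}\Phi(u)\,du\Big)\Big(\int_{\mathbb{R}^n} a(y)\,dy\Big) = 0,
\]
so $\mathcal{H}_\Phi a$ inherits the $s=0$ vanishing moment directly from $a$.

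The computational heart is the size estimate. Starting from the polar representation \eqref{RH1} (with $\Omega=1$) and applying Minkowski's integral inequality,
\[
\|\mathcal{H}_\Phi a\|_{L^q(\omega_2)} \leq \int_{2^m}^{2^M}\int_{S_{n-1}} \frac{|\Phi(t)|}{t}\, \|a(t^{-1}|\cdot|y')\|_{L^q(\omega_2)}\,d\sigma(y')\,dt.
\]
For fixed $(t,y')$, I compute the inner norm using polar coordinates together with the substitution $\rho = ts$, obtaining
\[
\|a(t^{-1}|\cdot|y')\|_{L^q(\omega_2)} = |S_{n-1}|^{1/q}\, t^{(\beta_2+n)/q}\Big(\int_0^\infty |a(sy')|^q s^{\beta_2+n-1}\,ds\Big)^{1/q}.
\]
Hölder's inequality on $S_{n-1}$ then controls the spherical average of the right-hand side by $|S_{n-1}|^{(q-1)/q}\|a\|_{L^q(\omega_2)}$. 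Collecting the $t$-factors yields $\|\mathcal{H}_\Phi a\|_{L^q(\omega_2)} \leq C_\Phi \|a\|_{L^q(\omega_2)}$ with $C_\Phi$ depending only on $\Phi,\beta_2,n,q,m,M$. Combined with the homogeneity identity $\omega_1(B(0,2^M r_0)) = 2^{M(n+\beta_1)}\omega_1(B(0,r_0))$, this shows that $\mathcal{H}_\Phi a$ is, up to a multiplicative constant independent of $a$, itself a central $(\alpha,q,0;\omega_1,\omega_2)_0$-atom supported in $B(0,2^M r_0)$. Theorem \ref{apply} then delivers the bounded extension to all of $H\dot{K}^{\alpha,p}_q(\omega_1,\omega_2)$.

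The main obstacle is essentially bookkeeping: verifying that every constant arising (from Minkowski, the radial change of variables, Hölder on $S_{n-1}$, and the $\omega_1$-homogeneity) depends only on the fixed data $\Phi, m, M, n, q, \alpha, \beta_1, \beta_2$ and not on the atom $a$. Conceptually, what drives the argument is that $\mathrm{supp}\,\Phi$ is both bounded and bounded away from $0$: this single structural assumption is what guarantees $\mathrm{supp}\,\mathcal{H}_\Phi a \subset B(0, 2^M r_0)$ and thereby reduces the Hardy-Herz estimate to a purely $L^q(\omega_2)$ estimate. For kernels $\Phi$ whose support reached to $0$ or to $\infty$, the operator would no longer map atoms to atoms, and one would have to expand $\mathcal{H}_\Phi a$ into an infinite atomic decomposition on dyadic annuli and estimate each piece separately.
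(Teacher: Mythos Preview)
Your argument is correct and, for this particular theorem, cleaner than the paper's. Both proofs reduce to showing a uniform bound on $\|\mathcal H_\Phi a\|_{H\dot K^{\alpha,p}_q(\omega_1,\omega_2)}$ for central $(\alpha,q,0;\omega_1,\omega_2)_0$-atoms $a$ and then invoke Theorem~\ref{apply}; both verify support, vanishing moment, neighbourhood-of-zero vanishing, and an $L^q(\omega_2)$ size estimate via Minkowski, polar coordinates, and H\"older on $S_{n-1}$. The difference is granularity: the paper first slices the $t$-integral along dyadic shells $(2^{k-1},2^k]$, $m<k\le M$, produces $M-m$ pieces $b_k$, shows each $b_k/\lambda_k$ is a central atom, and then appeals to Theorem~\ref{equiv} to bound the finite-atomic norm by $\big(\sum_k|\lambda_k|^p\big)^{1/p}$. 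You bypass the slicing entirely and observe that, since $\operatorname{supp}\Phi$ is already contained in a single compact annulus, $\mathcal H_\Phi a$ is itself a fixed constant times one atom supported in $B(0,2^M r_0)$.

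What each approach buys: your direct route is shorter and makes transparent that compact annular support of $\Phi$ is precisely what turns ``atom in'' into ``atom out''. The paper's dyadic decomposition is not needed here but is the template reused verbatim in Theorems~\ref{Hardy-Herz}--\ref{Hardy-Herz2}, where $\Phi$ may be supported on all of $(0,\infty)$ and one genuinely needs an infinite sum of dyadic central units; in that sense the paper is front-loading machinery. One small point you leave implicit: to pass from ``$\mathcal H_\Phi a$ is a constant times an atom'' to ``$\|\mathcal H_\Phi a\|_{H\dot K}\le C$'' you are using that a single atom has $H\dot K$-norm $\lesssim 1$, which is exactly the content of Theorem~\ref{equiv} (or the atomic characterisation). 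It would be worth citing that explicitly.
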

%%%%%%%%%%%%%%%%%%%%%%%%

\begin{proof}
 Let $a$ be any central $(\alpha, q,0;\omega_1,\omega_2)_0$-atom. Thus, there exits $j_a\in\mathbb Z$ such that ${\rm supp} (a)\subset B_{j_a}$ and $\|a\|_{L^q(\omega_2)}\lesssim \omega_1(B_{j_a})^{\frac{-\alpha}{n}}$. We will prove that 
\begin{align}\label{uniform}
\|\mathcal H_{\Phi}(a)\|_{H{\mathop{K}\limits^{.}}^{\alpha,p}_{q}(\omega_1,\omega_2)}\lesssim \int_{(2^m,2^M]}|\Phi(t)|dt.
\end{align}
Indeed, we can rewrite
$
\,\mathcal H_{\Phi}(a)(x)= \sum\limits_{k=m+1}^M\,\int_{(2^{k-1}, 2^k]}\int_{S_{n-1}} \dfrac{\Phi(t)}{t}a(t^{-1}|x|y')dy'dt:= \sum\limits_{k=m+1}^M b_k(x).
$
\\
From the definition of $b_k$ and ${\rm supp}(a)\subset B_{j_a}$, it is clear to see that 
\begin{align}\label{suppbk}
{\rm supp}(b_k)\subset B_{j_a+k}.
\end{align}
By the Minkowski inequality and the H\"{o}lder inequality, we have
\begin{align}
\|b_k\|_{L^q(\omega_2)}&=\Big(\int_{B_{j_a+k}}\Big|\int_{(2^{k-1}, 2^k]}\int_{S_{n-1}} \dfrac{\Phi(t)}{t}a(t^{-1}|x|y')dy'dt\Big|^q\omega_2(x)dx\Big)^{\frac{1}{q}}\nonumber
\\
&\leq \int_{(2^{k-1}, 2^k]}\frac{|\Phi(t)|}{t}\int_{S_{n-1}}\|a(|\cdot|t^{-1}y')\|_{L^q(\omega_2, B_{j_a+k})}dy'dt\nonumber
\\
&\lesssim \int_{(2^{k-1}, 2^k]}\frac{|\Phi(t)|}{t}\Big(\int_{S_{n-1}}\int_{B_{j_a+k}}|a(|x|t^{-1}y')|^q\omega_2(x)dxdy'\Big)^{\frac{1}{q}}dt.\nonumber
\end{align}
By polor coordinates, we calculate
\begin{align}\label{esaLq}
&\int_{S_{n-1}}\int_{B_{j_a+k}}|a(|x|t^{-1}y')|^q\omega_2(x)dxdy'
= \int_{S_{n-1}}\int_{0}^{2^{j_a+k}}\int_{S_{n-1}}|a(|rx'|t^{-1}y')|^qr^{n-1}|rx'|^{\beta_2}dx'drdy'\nonumber
\\
&\lesssim \int_{S_{n-1}}\int_{0}^{2^{j_a+k}}|a(rt^{-1}y')|^qr^{n-1+\beta_2}drdy'=t^{n+\beta_2}\int_{S_{n-1}}\int_{0}^{t^{-1}.2^{j_a+k}}|a(s.y')|^q s^{n-1+\beta_2}dsdy'\nonumber
\\
&= t^{n+\beta_2}\|a\|^q_{L^q(\omega_2, B^*_{ t^{-1}2^{j_a+k}})}\leq t^{n+\beta_2}\|a\|^q_{L^q(\omega_2)}.
\end{align}
Thus, by $\|a\|_{L^q(\omega_2)}\lesssim \omega_1(B_{j_a})^{\frac{-\alpha}{n}}$, it immediately follows that
\begin{align}\label{bkLq}
\|b_k\|_{L^q(\omega_2)}&\lesssim \Big(\int_{(2^{k-1},2^k]}\frac{|\Phi(t)|}{t^{1-\frac{(n+\beta_2)}{q}}}dt\Big)\|a\|_{L^q(\omega_2)}\lesssim \Big(\int_{(2^{k-1},2^k]}\frac{|\Phi(t)|}{t^{1-\frac{(n+\beta_2)}{q}}}dt\Big)\omega_1(B_{j_a+k})^{\frac{-\alpha}{n}}\Big(\frac{\omega_1(B_{j_a+k})}{\omega_1(B_{j_a})}\Big)^{\frac{\alpha}{n}}.
\end{align}
In addition, because $\omega_1$ is a power weighted function, one has
\begin{align}\label{omega1power}
\Big(\frac{\omega_1(B_{j_a+k})}{\omega_1(B_{j_a})}\Big)^{\frac{\alpha}{n}}\simeq \Big(\frac{2^{(j_a+k)(\beta_1+n)}}{2^{j_a(\beta_1+n)}}\Big)^{\frac{\alpha}{n}}=2^{k(\alpha+\frac{\beta_1\alpha}{n})}.
\end{align}
As reason above, by letting $t\in(2^{k-1},2^k]$, we have
\begin{align}\label{normbk}
\|b_k\|_{L^q(\omega_2)}&\lesssim  \Big(\int_{(2^{k-1},2^k]}\frac{|\Phi(t)|}{t^{1-\frac{(n+\beta_2)}{q}-\alpha-\frac{\beta_1\alpha}{n}}}dt\Big)\omega_1(B_{j_a+k})^{\frac{-\alpha}{n}}
:=\lambda_k.\omega_1(B_{j_a+k})^{\frac{-\alpha}{n}},
\end{align}
 where 
$$
\lambda_k=\int_{(2^{k-1},2^k]}\frac{|\Phi(t)|}{t^{1-\frac{(n+\beta_2)}{q}-\alpha-\frac{\beta_1\alpha}{n}}}dt.
$$
Next, by $\omega_2=|x|^{\beta_2}\in A_1$ and ${\rm supp}(b_k)\subset B_{j_a+k}$, we have $b_k\in L^1(\mathbb R^n)$. Combining this together with $\int_{\mathbb R^n}a(x)dx=0$, by polar coordinates and the Fubini theorem, it is easy to check that
\begin{align}\label{intbk=0}
\int_{\mathbb R^n}b_k(x)dx=|S_{n-1}|\int_{(2^{k-1},2^{k}]}\Phi(t)t^{n-1} dt.\int_{\mathbb R^n} a(y)dy=0.
\end{align}
Since there exists $r_a\in\mathbb Z$ such that $a=0$ on $B_{r_a}$, we have
\begin{align}\label{neighbk}
b_k=0\,\;\textit{ \rm on }\,\; B_{r_a+k-1}.
\end{align}
For convenience, we denote
$
b_{j_a,k}= \left\{ \begin{array}{l}
\dfrac{b_k}{\lambda_k},\textit{ \rm if }\, \lambda_k\neq 0,
\\
0,\,\,\,\,\textit\,{ \rm otherwise }.\nonumber
\end{array} \right.
$ 
\\
This shows that
$$
\mathcal H_{\Phi}(a)(x)= \sum\limits_{k=m+1}^M \lambda_k. b_{j_a,k}(x). 
$$
By (\ref{suppbk}), (\ref{normbk}), (\ref{intbk=0}) and (\ref{neighbk}), we have $b_{j_a,k}$ is central $(\alpha, q,0; \omega_1,\omega_2)_0$-atom. This implies that 
$
\mathcal H_{\Phi}(a)\in {\mathop{F}\limits^.}^{\alpha, q, 0}_p(\omega_1,\omega_2).
$ Consequently, by Theorem \ref{equiv}, one has
\begin{align}
\|\mathcal H_{\Phi}(a)\|_{H{\mathop{K}\limits^.}^{\alpha, p}_q(\omega_1,\omega_2)}&\lesssim \|\mathcal H_{\Phi}(a)\|_{{\mathop{F}\limits^.}^{\alpha, q, 0}_p(\omega_1,\omega_2)}\leq \Big(\sum\limits_{k=m+1}^M |\lambda_k|^p\Big)^{\frac{1}{p}}\lesssim \sum\limits_{k=m+1}^M |\lambda_k|,\nonumber
\end{align}
which implies that the inequality (\ref{uniform}) is true. From this, by Theorem \ref{apply}, we conclude that
$$
\|\mathcal H_{\Phi}(f)\|_{H{\mathop{K}\limits^{.}}^{\alpha,p}_{q}(\omega_1,\omega_2)}\lesssim \Big(\int_{(2^m, 2^M]}|\Phi(t)|dt\Big).\|f\|_{H{\mathop{K}\limits^{.}}^{\alpha,p}_{q}(\omega_1,\omega_2)},
$$
 for all $f\in H{\mathop{K}\limits^{.}}^{\alpha,p}_{q}(\omega_1,\omega_2)$.
Therefore, the proof of this theorem is finished.
\end{proof}
%%%%%%%%%%%%%%%%%%
%%%%%%%%%%%%%%%%%%%%
\begin{theorem}\label{Hardy-Herz}
Let $1<q<\infty, \alpha\in [n(1-1/q),\infty)$, $\omega_1=|x|^{\beta_1}$, $\omega_2=|x|^{\beta_2}$ with $\beta_1,\beta_2\in (-n,0]$. Let $\Omega\in L^{q'}(S_{n-1})$ and $\Phi$ be a radial function.
\\
{\rm (i)} If $p=1$ and 
$$
\mathcal C_1=\int_{0}^{\infty}\frac{|\Phi(t)|}{t^{1-\frac{(n+\beta_2)}{q}-\alpha-\frac{\beta_1\alpha}{n}}}dt<\infty,
$$
we then have
$
\|\mathcal H_{\Phi,\Omega}(f)\|_{{\mathop{K}\limits^{.}}^{\alpha,1}_{q}(\omega_1,\omega_2)}\lesssim C_1\|\Omega\|_{L^{q'}(S_{n-1})}\mathcal \|f\|_{H{\mathop{K}\limits^{.}}^{\alpha, 1}_{q}(\omega_1,\omega_2)},\,\textit{for all}\, f\in H{\mathop{K}\limits^{.}}^{\alpha, 1}_{q}(\omega_1,\omega_2).
$
\\
{\rm (ii)} If $0<p<1$, $\sigma>\frac{1-p}{p}$ and
$$
\mathcal C_2= \int_{0}^{\infty}\frac{|\Phi(t)|}{t^{1-\frac{(n+\beta_2)}{q}-\alpha-\frac{\beta_1\alpha}{n}}}\Big(\chi_{(0,1]}(t).|{\rm log}_2(t)|^{\sigma} + \chi_{(1,\infty)}(t).\big({\rm log}_2(t) +1 \big)^{\sigma}\Big)dt<\infty,
$$
we then have
$
\|\mathcal H_{\Phi,\Omega}(f)\|_{{\mathop{K}\limits^{.}}^{\alpha,p}_{q}(\omega_1,\omega_2)}\lesssim C_2\|\Omega\|_{L^{q'}(S_{n-1})}\mathcal \|f\|_{H{\mathop{K}\limits^{.}}^{\alpha, p}_{q}(\omega_1,\omega_2)},\,\textit{for all}\, f\in H{\mathop{K}\limits^{.}}^{\alpha, p}_{q}(\omega_1,\omega_2).
$
\end{theorem}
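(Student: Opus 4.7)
The plan is to reduce both assertions to a uniform atomic estimate via Theorem \ref{apply}, taking the target $r$-quasi-Banach space to be ${\mathop{K}\limits^{.}}^{\alpha,p}_{q}(\omega_1,\omega_2)$ itself with $r=p$: when $p\leq 1$ the definition of its norm directly yields the $p$-quasi-triangle inequality, and $\mathcal{H}_{\Phi,\Omega}$ is linear, hence trivially $\mathcal{B}_r$-sublinear. Thus it suffices to bound $\|\mathcal{H}_{\Phi,\Omega}(a)\|_{{\mathop{K}\limits^{.}}^{\alpha,p}_{q}(\omega_1,\omega_2)}$ uniformly over central atoms $a$.

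Fix a central $(\alpha,q,s;\omega_1,\omega_2)_0$-atom $a$ with $\mathrm{supp}(a)\subset B_{j_a}$. Using the polar form (\ref{RH1}), I split the $t$-integration dyadically to write
$$
\mathcal{H}_{\Phi,\Omega}(a)(x)=\sum_{k\in\mathbb{Z}}b_k(x),\qquad b_k(x):=\int_{(2^{k-1},2^k]}\!\!\int_{S_{n-1}}\frac{\Phi(t)}{t}\,\Omega(y')\,a(t^{-1}|x|y')\,dy'\,dt.
$$
Exactly as in the proof of Theorem \ref{Hardy-Hardy}, $\mathrm{supp}(b_k)\subset B_{j_a+k}$. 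Estimating $\|b_k\|_{L^q(\omega_2)}$ by Minkowski's inequality in $(t,y')$, then inserting H\"older's inequality in the angular variable with exponents $q'$ and $q$ to extract $\|\Omega\|_{L^{q'}(S_{n-1})}$, the change of variables in (\ref{esaLq}) combined with $\|a\|_{L^q(\omega_2)}\lesssim\omega_1(B_{j_a})^{-\alpha/n}$ and the power-weight identity (\ref{omega1power}) give
$$
\|b_k\|_{L^q(\omega_2)}\lesssim \|\Omega\|_{L^{q'}(S_{n-1})}\,\lambda_k\,\omega_1(B_{j_a+k})^{-\alpha/n},
$$
where $\lambda_k:=\int_{(2^{k-1},2^k]} |\Phi(t)|\,t^{-(1-(n+\beta_2)/q-\alpha-\beta_1\alpha/n)}\,dt$. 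After dividing by the coefficient $C\|\Omega\|_{L^{q'}}\lambda_k$ and reindexing $j:=j_a+k$, each $b_k$ becomes a coefficient multiple of a dyadic central $(\alpha,q;\omega_1,\omega_2)$-unit supported in $B_j$, so Theorem \ref{blockHerz} yields
$$
\|\mathcal{H}_{\Phi,\Omega}(a)\|_{{\mathop{K}\limits^{.}}^{\alpha,p}_{q}(\omega_1,\omega_2)}\lesssim \|\Omega\|_{L^{q'}(S_{n-1})}\Bigl(\sum_{k\in\mathbb{Z}}\lambda_k^p\Bigr)^{1/p}.
$$

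For part (i), taking $p=1$ the right-hand sum reassembles into the single integral $\sum_k\lambda_k=\mathcal{C}_1$, and Theorem \ref{apply} with $r=1$ delivers the claim. For part (ii), $0<p<1$, I would apply H\"older's inequality with exponents $1/p$ and $1/(1-p)$:
$$
\sum_k\lambda_k^p=\sum_k\bigl(\lambda_k(|k|+1)^\sigma\bigr)^p\,(|k|+1)^{-\sigma p}\leq\Bigl(\sum_k\lambda_k(|k|+1)^\sigma\Bigr)^p\Bigl(\sum_k(|k|+1)^{-\sigma p/(1-p)}\Bigr)^{1-p}.
$$
The hypothesis $\sigma>(1-p)/p$ is exactly what forces $\sigma p/(1-p)>1$, making the second sum convergent. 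On each dyadic interval $(2^{k-1},2^k]$, the factor $(|k|+1)^\sigma$ is comparable to the logarithmic weight appearing in the definition of $\mathcal{C}_2$, so $\sum_k\lambda_k(|k|+1)^\sigma\lesssim\mathcal{C}_2$; a final application of Theorem \ref{apply} with $r=p$ then concludes.

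The chief obstacle is the $p<1$ regime: the $\ell^p$-summability of the dyadic coefficients $\lambda_k$ is strictly weaker than the $\ell^1$-summability that one gets for free from $\mathcal{C}_1<\infty$. The H\"older trick above forces the introduction of the logarithmic factor $(|\log_2 t|+1)^\sigma$ in $\mathcal{C}_2$, and the threshold $\sigma>(1-p)/p$ arises precisely from the convergence of $\sum_k(|k|+1)^{-\sigma p/(1-p)}$; a genuinely weaker assumption on $\sigma$ would break the central-atom estimate.
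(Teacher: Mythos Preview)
Your proposal is correct and follows essentially the same route as the paper: reduce to a uniform atomic bound via Theorem~\ref{apply}, split the $t$-integral dyadically, control each piece in $L^q(\omega_2)$ by Minkowski plus angular H\"older (picking up $\|\Omega\|_{L^{q'}(S_{n-1})}$) together with (\ref{esaLq}) and (\ref{omega1power}), then invoke Theorem~\ref{blockHerz} and the H\"older-in-$k$ trick for $p<1$. The only cosmetic differences are that the paper first dominates $|\mathcal H_{\Phi,\Omega}(a)|$ pointwise by a sum of nonnegative $\widetilde b_k$ before applying Theorem~\ref{blockHerz}, and weights with $|k|^\sigma$ rather than your $(|k|+1)^\sigma$ in the $\ell^p$-to-$\ell^1$ step.
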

%%%%%%%%%%%%%%%%%%%%%
%%%%%%%%%%%%%%%%%%%%%
\begin{proof}
Let us fix non-negative integer $s\geq [\alpha-n(1-1/q)]$, and let $a$ be any central $(\alpha, q, s;\omega_1, \omega_2)_0$-atom with ${\rm supp}(a)\subset B_{j_a}$ and $\|a\|_{L^q(\omega_2)}\lesssim \omega_1(B_{j_a})^{\frac{-\alpha}{n}}$.
We need to prove that
\begin{align}\label{uniform1}
\|\mathcal H_{\Phi,\Omega}(a)\|_{{\mathop{K}\limits^{.}}^{\alpha,p}_{q}(\omega_1,\omega_2)}\lesssim \left\{ \begin{array}{l}
\mathcal C_1.\|\Omega\|_{L^{q'}(S_{n-1})},\,\,p=1,
\\
\mathcal C_2.\|\Omega\|_{L^{q'}(S_{n-1})},\,\,p\in (0,1)\,\textit{\rm and}\, \sigma>\frac{(1-p)}{p}.
\end{array} \right.
\end{align}
Infact, we have
\begin{align}\label{xichmabk}
|\mathcal H_{\Phi,\Omega}(a)(x)|&\leq\sum\limits_{k\in\mathbb Z}\,\int_{(2^{k-1}, 2^k]}\int_{S_{n-1}} \dfrac{|\Phi(t)|}{t}|\Omega(y')|.|a(t^{-1}|x|y')|dy'dt:=  \sum\limits_{k\in\mathbb Z} \widetilde{b}_k(x).
\end{align}
By estimating as (\ref{suppbk}) and (\ref{normbk}) above, it is obvious to see that 
\begin{align}\label{suppnorm}
{\rm supp}(\widetilde{b}_k)\subset B_{j_a+k}\,\textit{ \rm and }\,\|\widetilde{b}_k\|_{L^q(\omega_2)}\lesssim \lambda_k.\|\Omega\|_{L^{q'}(S_{n-1})}.\omega_1(B_{j_a+k})^{\frac{-\alpha}{n}}.
\end{align}
Denote $\widetilde{b}_{j_a,k}$ as follows
$
\widetilde{b}_{j_a,k} = \left\{ \begin{array}{l}
\dfrac{\widetilde{b}_k}{\|\Omega\|_{L^{q'}(S_{n-1})}.\lambda_k},\textit{ \rm if }\, \lambda_k\neq 0,
\\
0,\,\textit\,\,\,\,\,\,\,\,\,\,\,\,\,\,\,\,\,\,\,\,\,\,\,\,\,\,\,\,\,\,{ \rm otherwise }.\nonumber
\end{array} \right.
$
\\
This gives 
$$
\sum\limits_{k\in\mathbb Z} \widetilde{b}_k= \sum\limits_{k\in\mathbb Z}\|\Omega\|_{L^{q'}(S_{n-1})}.\lambda_k.\widetilde{b}_{j_a,k}.
$$ 
By (\ref{suppnorm}), we see that $\widetilde{b}_{j_a,k}$ is a dyadic central $(\alpha, q,\omega_1,\omega_2)$-unit. Therefore, by Theorem \ref{blockHerz}, we infer
\begin{align}\label{lambdakHardyHerz}
\|\mathcal H_{\Phi, \Omega}(a)\|_{{\mathop{K}\limits^{.}}^{\alpha,p}_{q}(\omega_1,\omega_2)}\leq \|\sum\limits_{k\in\mathbb Z} \widetilde{b}_k\|_{{\mathop{K}\limits^{.}}^{\alpha,p}_{q}(\omega_1,\omega_2)}\lesssim \|\Omega\|_{L^{q'}(S_{n-1})}\Big(\sum\limits_{k\in\mathbb Z} |\lambda_k|^p\Big)^{\frac{1}{p}}.
\end{align}
For $p=1$, we estimate
\begin{align}\label{C1HardyHerz}
\sum\limits_{k\in\mathbb Z} |\lambda_k| =\sum\limits_{k\in\mathbb Z}\,\int_{(2^{k-1},2^k]}\frac{|\Phi(t)|}{t^{1-\frac{(n+\beta_2)}{q}-\alpha-\frac{\beta_1\alpha}{n}}}dt=\mathcal C_1.
\end{align}
For $p\in (0,1)$ and $\sigma>\frac{1-p}{p}$, by the H\"{o}lder inequality, we get
\begin{align}\label{lambdakp}
&\Big(\sum\limits_{k\in\mathbb Z} |\lambda_k|^p\Big)^{\frac{1}{p}}\lesssim \sum\limits_{k\in\mathbb Z} |k|^{\sigma}|\lambda_k|= \sum\limits_{k=1}^{\infty} \,\int_{(2^{k-1},2^k]}\frac{|\Phi(t)|}{t^{1-\frac{(n+\beta_2)}{q}-\alpha-\frac{\beta_1\alpha}{n}}}|k|^{\sigma}dt+\sum\limits_{k=-\infty}^0 \,\int_{(2^{k-1},2^k]}\frac{|\Phi(t)|}{t^{1-\frac{(n+\beta_2)}{q}-\alpha-\frac{\beta_1\alpha}{n}}}|k|^{\sigma}dt\nonumber
\\
&\lesssim \sum\limits_{k=1}^{\infty}\,\int_{(2^{k-1},2^k]}\frac{|\Phi(t)|}{t^{1-\frac{(n+\beta_2)}{q}-\alpha-\frac{\beta_1\alpha}{n}}}({\rm log}_2(t)+1)^{\sigma}dt +\sum\limits_{k=-\infty}^0\,\int_{(2^{k-1},2^k]}\frac{|\Phi(t)|}{t^{1-\frac{(n+\beta_2)}{q}-\alpha-\frac{\beta_1\alpha}{n}}}|{\rm log}_2(t)|^{\sigma}dt= \mathcal C_2.
\end{align}
From this, by (\ref{lambdakHardyHerz}) and (\ref{C1HardyHerz}), the inequality (\ref{uniform1}) is proved. Combining Theorem \ref{apply} and the inequality (\ref{uniform1}), we finish the proof of this theorem.
\end{proof}
%%%%%%%%%%%%%%%%%%%%%%%%
%%%%%%%%%%%%%%%%%%%%%%%%
%%%%%%%%%%%%%%%%%%
%%%%%%%%%%%%%%%%%%%%
\begin{theorem}\label{Hardy-Herz1}
Suppose $1<q<\infty, \alpha\in [n(1-1/q),\infty)$, $\omega_2=|x|^{\beta_2}$ with $\beta_2\in (-n,0]$. Let $\Omega\in L^{q'}(S_{n-1})$ and $\Phi$ be a radial function. At the same time, let $\omega_1\in A_1$ with the finite critical index $r_{\omega_1}$ for the reverse H\"{o}lder condition and $\delta\in (1,r_{\omega_1})$.
\\
{\rm (i)} If $p=1$ and 
$$
\mathcal C_3=\int_{0}^{\infty}\frac{|\Phi(t)|}{t^{1-\frac{(n+\beta_2)}{q}}}\Big(\chi_{(0,1]}(t).t^{\frac{\alpha(\delta-1)}{\delta}}+ \chi_{(1,\infty)}(t).t^{\alpha}\Big)dt<\infty,
$$
then we have
$
\|\mathcal H_{\Phi,\Omega}(f)\|_{{\mathop{K}\limits^{.}}^{\alpha,1}_{q}(\omega_1,\omega_2)}\lesssim C_3.\|\Omega\|_{L^{q'}(S_{n-1})}\mathcal .\|f\|_{H{\mathop{K}\limits^{.}}^{\alpha, 1}_{q}(\omega_1,\omega_2)},\,\,\textit{for all}\, f\in H{\mathop{K}\limits^{.}}^{\alpha, 1}_{q}(\omega_1,\omega_2).
$
\\
{\rm (ii)} If $0<p<1$, $\sigma>\frac{1-p}{p}$ and
$$
\mathcal C_4= \int_{0}^{\infty}\frac{|\Phi(t)|}{t^{1-\frac{(n+\beta_2)}{q}}}\Big(\chi_{(0,1]}(t).t^{\frac{\alpha(\delta-1)}{\delta}}|{\rm log}_2(t)|^{\sigma} + \chi_{(1,\infty)}(t).t^{\alpha}\big({\rm log}_2(t) +1 \big)^{\sigma}\Big)dt<\infty,
$$
then we have
$
\|\mathcal H_{\Phi,\Omega}(f)\|_{{\mathop{K}\limits^{.}}^{\alpha,p}_{q}(\omega_1,\omega_2)}\lesssim C_4.\|\Omega\|_{L^{q'}(S_{n-1})}\mathcal .\|f\|_{H{\mathop{K}\limits^{.}}^{\alpha, p}_{q}(\omega_1,\omega_2)},\,\,\textit{for all}\, f\in H{\mathop{K}\limits^{.}}^{\alpha, p}_{q}(\omega_1,\omega_2).
$
\end{theorem}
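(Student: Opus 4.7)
The plan is to mirror the argument used for Theorem \ref{Hardy-Herz} and to isolate the single place where the power-weight form of $\omega_1$ was exploited, replacing it by two-sided ratio bounds furnished by Proposition \ref{pro2.3DFan}. Since $\omega_1\in A_1\cap RH_\delta$ with $\delta\in(1,r_{\omega_1})$, that proposition yields, for every measurable $E\subset B$, $C_1\tfrac{|E|}{|B|}\leq\tfrac{\omega_1(E)}{\omega_1(B)}\leq C_2\bigl(\tfrac{|E|}{|B|}\bigr)^{(\delta-1)/\delta}$. The strategy is to invoke Theorem \ref{apply}, so it suffices to estimate $\|\mathcal H_{\Phi,\Omega}(a)\|_{{\mathop{K}\limits^{.}}^{\alpha,p}_q(\omega_1,\omega_2)}$ uniformly for an arbitrary central $(\alpha,q,s;\omega_1,\omega_2)_0$-atom $a$ with ${\rm supp}(a)\subset B_{j_a}$ and $\|a\|_{L^q(\omega_2)}\lesssim\omega_1(B_{j_a})^{-\alpha/n}$.

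First I would decompose $|\mathcal H_{\Phi,\Omega}(a)(x)|\leq\sum_{k\in\mathbb Z}\widetilde b_k(x)$ exactly as in (\ref{xichmabk}), and recover ${\rm supp}(\widetilde b_k)\subset B_{j_a+k}$ together with $\|\widetilde b_k\|_{L^q(\omega_2)}\lesssim\|\Omega\|_{L^{q'}(S_{n-1})}\bigl(\int_{(2^{k-1},2^k]}|\Phi(t)|t^{(n+\beta_2)/q-1}dt\bigr)\|a\|_{L^q(\omega_2)}$ by reproducing the calculation that led to (\ref{suppnorm}); this step still uses only the power form $\omega_2=|x|^{\beta_2}$. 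To pass from $\omega_1(B_{j_a})^{-\alpha/n}$ to $\omega_1(B_{j_a+k})^{-\alpha/n}$, I would split on the sign of $k$: for $k\geq 1$ the $A_1$ lower bound applied with $E=B_{j_a}\subset B=B_{j_a+k}$ gives $(\omega_1(B_{j_a+k})/\omega_1(B_{j_a}))^{\alpha/n}\lesssim 2^{k\alpha}\simeq t^{\alpha}$ on $t\in(2^{k-1},2^k]$, whereas for $k\leq 0$ the reverse Hölder upper bound with $E=B_{j_a+k}\subset B=B_{j_a}$ gives $(\omega_1(B_{j_a+k})/\omega_1(B_{j_a}))^{\alpha/n}\lesssim 2^{k\alpha(\delta-1)/\delta}\simeq t^{\alpha(\delta-1)/\delta}$. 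Defining $\lambda_k=\int_{(2^{k-1},2^k]}\frac{|\Phi(t)|}{t^{1-(n+\beta_2)/q}}\bigl(\chi_{\{k\geq 1\}}t^{\alpha}+\chi_{\{k\leq 0\}}t^{\alpha(\delta-1)/\delta}\bigr)dt$ and normalising $\widetilde b_{j_a,k}=\widetilde b_k/(\|\Omega\|_{L^{q'}(S_{n-1})}\lambda_k)$ turns each nonzero piece into a dyadic central $(\alpha,q,\omega_1,\omega_2)$-unit supported in $B_{j_a+k}$, so Theorem \ref{blockHerz} delivers $\|\mathcal H_{\Phi,\Omega}(a)\|_{{\mathop{K}\limits^{.}}^{\alpha,p}_q(\omega_1,\omega_2)}\lesssim\|\Omega\|_{L^{q'}(S_{n-1})}\bigl(\sum_{k\in\mathbb Z}|\lambda_k|^p\bigr)^{1/p}$.

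For $p=1$ the final sum telescopes directly to $\mathcal C_3$ once the dyadic intervals are reglued into $\int_0^1\cdots+\int_1^\infty\cdots$. For $0<p<1$ and $\sigma>(1-p)/p$, Hölder's inequality with the summable weight $|k|^{-\sigma p/(1-p)}$ (as in (\ref{lambdakp})) gives $(\sum_k|\lambda_k|^p)^{1/p}\lesssim\sum_k|k|^{\sigma}|\lambda_k|$; on $(2^{k-1},2^k]$ the factor $|k|^{\sigma}$ is comparable to $|\log_2 t|^{\sigma}$ when $k\leq 0$ and to $(\log_2 t+1)^{\sigma}$ when $k\geq 1$, so the series reassembles into $\mathcal C_4$. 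Theorem \ref{apply}, applied with the $p$-quasi-Banach target $\mathcal B_p={\mathop{K}\limits^{.}}^{\alpha,p}_q(\omega_1,\omega_2)$, then extends the atomic bound to the whole space $H{\mathop{K}\limits^{.}}^{\alpha,p}_q(\omega_1,\omega_2)$. The main technical obstacle is precisely this case split on $k$: unlike Theorem \ref{Hardy-Herz}, where the power structure of $\omega_1$ produced one two-sided equivalence valid for every $k$, here the $A_1$ inequality is informative only in one direction and the reverse Hölder inequality only in the other, and one must verify that the resulting mixed exponent — $\alpha(\delta-1)/\delta$ on $(0,1]$ versus $\alpha$ on $(1,\infty)$ — matches precisely the definitions of $\mathcal C_3$ and $\mathcal C_4$.
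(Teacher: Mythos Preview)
Your proposal is correct and follows essentially the same route as the paper's own proof: the same dyadic decomposition into $\widetilde b_k$, the same use of Proposition~\ref{pro2.3DFan} with the case split $k\ge 1$ versus $k\le 0$ to control $(\omega_1(B_{j_a+k})/\omega_1(B_{j_a}))^{\alpha/n}$ (this is exactly inequality~(\ref{omega1A1}) in the paper), the same normalization to dyadic central units via Theorem~\ref{blockHerz}, and the same H\"older-in-$k$ argument for $p<1$. The only cosmetic difference is that the paper calls your $\lambda_k$ by the name $\mu_k$.
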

%%%%%%%%%%%%%%%%%%%
%%%%%%%%%%%%%%%%%%%
\begin{proof}
Analogously to Theorem \ref{Hardy-Herz}, to prove the theorem, it suffices to show that
\begin{align}\label{uniform2}
\|\mathcal H_{\Phi,\Omega}(a)\|_{{\mathop{K}\limits^{.}}^{\alpha,p}_{q}(\omega_1,\omega_2)}\lesssim \left\{ \begin{array}{l}
\mathcal C_3.\|\Omega\|_{L^{q'}(S_{n-1})},\,\,p=1,
\\
\mathcal C_4.\|\Omega\|_{L^{q'}(S_{n-1})},\,\,p\in (0,1)\,\textit{\rm and}\,\sigma>\frac{(1-p)}{p}.
\end{array} \right.
\end{align}
where non-negative integer $s\geq [\alpha-n(1-1/q)]$ and $a$ is any central $(\alpha, q, s;\omega_1, \omega_2)_0$-atom with ${\rm supp}(a)\subset B_{j_a}$. Let us recall that 
$
\widetilde{b}_k(x)=\int_{(2^{k-1}, 2^k]}\int_{S_{n-1}} \dfrac{|\Phi(t)|}{t}|\Omega(y')|.|a(t^{-1}|x|y')|dy'dt,
$ with ${\rm supp}(\widetilde{b}_k)\subset B_{j_a+k}$. Now, by estimating as (\ref{bkLq}) above, we also have
$$
\|\widetilde{b}_k\|_{L^q(\omega_2)}\leq\|\Omega\|_{L^{q'}(S_{n-1})}.\Big(\int_{(2^{k-1},2^k]}\frac{|\Phi(t)|}{t^{1-\frac{(n+\beta_2)}{q}}}dt\Big)\omega_1(B_{j_a+k})^{\frac{-\alpha}{n}}\Big(\frac{\omega_1(B_{j_a+k})}{\omega_1(B_{j_a})}\Big)^{\frac{\alpha}{n}}.
$$
On the other hand, by $\omega_1\in A_1$ and Proposition \ref{pro2.3DFan}, we get
\begin{align}\label{omega1A1}
\Big(\frac{\omega_1(B_{j_a+k})}{\omega_1(B_{j_a})}\Big)^{\frac{\alpha}{n}}\lesssim\left\{ \begin{array}{l}
\big(\frac{|B_{j_a+k}|}{|B_{j_a}|}\big)^{\frac{\alpha}{n}}\lesssim 2^{k\alpha},\,\,\,\,\,\,\,\,\,\,\,\,\,\,\,\textit{\rm if}\,\,\, k\geq 1,
\\
\\
\big(\frac{|B_{j_a+k}|}{|B_{j_a}|}\big)^{\frac{(\delta-1)\alpha}{\delta n}}\lesssim 2^{\frac{k\alpha(\delta-1)}{\delta}},\,\textit{\rm otherwise.}
\end{array} \right.
\end{align}
Thus, by letting $t\in (2^{k-1}, 2^k]$, we have
\begin{align}
\|\widetilde{b}_k\|_{L^q(\omega_2)}&\lesssim \omega_1(B_{j_a+k})^{\frac{-\alpha}{n}}.\|\Omega\|_{L^{q'}(S_{n-1})}.
\left\{ \begin{array}{l}
\int_{(2^{k-1},2^k]}\frac{|\Phi(t)|}{t^{1-\frac{(n+\beta_2)}{q}-\alpha}}dt,\,\,\,\,\,\,\,\,\,\,\,\,\,\,\,\,\textit{\rm if}\,\,k\geq 1,
\\
\\
\int_{(2^{k-1},2^k]}\frac{|\Phi(t)|}{t^{1-\frac{(n+\beta_2)}{q}-\frac{\alpha(\delta-1)}{\delta}}}dt,\,\,\textit{\rm otherwise},
\end{array} \right.
\nonumber
\\
&:= \omega_1(B_{j_a+k})^{\frac{-\alpha}{n}}.\|\Omega\|_{L^{q'}(S_{n-1})}.{\mu}_k.\nonumber
\end{align}
Denote $\widetilde{b}^*_{j_a,k}$ as follows
$
\widetilde{b}^*_{j_a,k} = \left\{ \begin{array}{l}
\dfrac{\widetilde{b}_k}{\|\Omega\|_{L^{q'}(S_{n-1})}.{\mu}_k},\,\,\textit{\rm if }\, \mu_k\neq 0,
\\
0,\,\,\,\,\,\,\,\,\,\,\,\,\,\,\,\,\,\,\,\,\,\,\,\,\,\,\,\,\,\,\,\, \textit\,{\rm otherwise}.\nonumber
\end{array} \right.
$
\\
Thus, we have
$$
\sum\limits_{k\in\mathbb Z} \widetilde{b}_k= \sum\limits_{k\in\mathbb Z}\|\Omega\|_{L^{q'}(S_{n-1})}.\mu_k.\widetilde{b}^*_{j_a,k},
$$ 
where $\widetilde{b}^*_{j_a,k}$ is a dyadic central $(\alpha, q,\omega_1,\omega_2)$-unit. From this, by the inequality (\ref{xichmabk}) and Theorem \ref{blockHerz}, one has
$
\|\mathcal H_{\Phi,\Omega}(a)\|_{{\mathop{K}\limits^{.}}^{\alpha,p}_{q}(\omega_1,\omega_2)}\lesssim \|\Omega\|_{L^{q'}(S_{n-1})}.\Big(\sum\limits_{k\in\mathbb Z} |\mu_k|^p\Big)^{\frac{1}{p}}.
$
\\
For $p=1$, it is evident to see that
$\sum\limits_{k\in\mathbb Z} |\mu_k|=\mathcal C_3$. Next, we consider for the case $p\in (0,1)$ and $\sigma>\frac{(1-p)}{p}$.  By the arguments as (\ref{lambdakp}) above, we also have
\begin{align}
&\Big(\sum\limits_{k\in\mathbb Z} |\mu_k|^p\Big)^{\frac{1}{p}}\lesssim
\sum\limits_{k\in\mathbb Z} |k|^{\sigma}|\mu_k|= \sum\limits_{k=1}^{\infty} \,\int_{(2^{k-1},2^k]}\frac{|\Phi(t)|}{t^{1-\frac{(n+\beta_2)}{q}-\alpha}}|k|^{\sigma}dt+\sum\limits_{k=-\infty}^0 \,\int_{(2^{k-1},2^k]}\frac{|\Phi(t)|}{t^{1-\frac{(n+\beta_2)}{q}-\frac{\alpha(\delta-1)}{\delta}}}|k|^{\sigma}dt\nonumber
\\
&\lesssim \sum\limits_{k=1}^{\infty}\,\int_{(2^{k-1},2^k]}\frac{|\Phi(t)|}{t^{1-\frac{(n+\beta_2)}{q}-\alpha}}({\rm log}_2(t)+1)^{\sigma}dt +\sum\limits_{k=-\infty}^0\,\int_{(2^{k-1},2^k]}\frac{|\Phi(t)|}{t^{1-\frac{(n+\beta_2)}{q}-\frac{\alpha(\delta-1)}{\delta}}}|{\rm log}_2(t)|^{\sigma}dt= \mathcal C_4.
\nonumber
\end{align}
Consequently, the inequality (\ref{uniform2}) is true. This concludes that the proof of this theorem is ended.
\end{proof}
%%%%%%%%%%%%%%%%%%%%%%%
%%%%%%%%%%%%%%%%%%%%%%%%%
\begin{theorem}\label{Hardy-Herz2}
Let $1\leq q^*<q <\infty$, $0<\alpha^*<\infty$, $\alpha\in [n(1-1/q),\infty)$. Let $\Omega\in L^{q'}(S_{n-1})$, $\Phi$ be a radial function, $\omega_i\in A_1$ with the finite critical index $r_{\omega_i}$ for the reverse H\"{o}lder condition and $\delta_i\in (1,r_{\omega_i})$, for all $i=1,2$. Assume that $q>q^*r_{\omega_2}'$ and the following conditions are true:
\begin{align}\label{qalpha}
\frac{1}{q}+\frac{\alpha}{n}=\frac{1}{q^*}+\frac{\alpha^*}{n}.
\end{align}
\begin{align}\label{omega2/omega1}
\omega_2(B_k)\lesssim \omega_1(B_k),\,\textit{ for all }\,\, k\in\mathbb Z.
\end{align}
 Denote  $\gamma_1=\frac{(\delta_2-1)}{\delta_2}.(\frac{n}{q}+\alpha)- \frac{\alpha^*}{\delta_1}$, $\gamma_2=\frac{n}{q}+\alpha+ \frac{\alpha^*}{\delta_2}$.
\\
{\rm (i)} If $p=1$ and 
$$
\mathcal C_5=\int_{0}^{\infty}\frac{|\Phi(t)|}{t}\Big(\chi_{(0,1]}(t).t^{\gamma_1}+ \chi_{(1,\infty)}(t).t^{\gamma_2}\Big)dt<\infty,
$$
then we have
$
\|\mathcal H_{\Phi,\Omega}(f)\|_{{\mathop{K}\limits^{.}}^{\alpha^*,1}_{q^*}(\omega_1,\omega_2)}\lesssim C_5.\|\Omega\|_{L^{q'}(S_{n-1})}\mathcal .\|f\|_{H{\mathop{K}\limits^{.}}^{\alpha, 1}_{q}(\omega_1,\omega_2)},\,\,\textit{for all}\, f\in H{\mathop{K}\limits^{.}}^{\alpha, 1}_{q}(\omega_1,\omega_2).
$
\\
{\rm (ii)} If $0<p<1$, $\sigma>\frac{1-p}{p}$ and
$$
\mathcal C_6= \int_{0}^{\infty}\frac{|\Phi(t)|}{t}\Big(\chi_{(0,1]}(t).t^{\gamma_1}|{\rm log}_2(t)|^{\sigma} + \chi_{(1,\infty)}(t).t^{\gamma_2}\big({\rm log}_2(t) +1 \big)^{\sigma}\Big)dt<\infty,
$$
then we have
$
\|\mathcal H_{\Phi,\Omega}(f)\|_{{\mathop{K}\limits^{.}}^{\alpha^*,p}_{q^*}(\omega_1,\omega_2)}\lesssim C_6.\|\Omega\|_{L^{q'}(S_{n-1})}\mathcal .\|f\|_{H{\mathop{K}\limits^{.}}^{\alpha, p}_{q}(\omega_1,\omega_2)},\,\,\textit{ for all }\, f\in H{\mathop{K}\limits^{.}}^{\alpha, p}_{q}(\omega_1,\omega_2).
$
\end{theorem}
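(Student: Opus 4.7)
The plan is to adapt the atomic-decomposition argument of Theorems \ref{Hardy-Herz} and \ref{Hardy-Herz1} to the scale-changing setting here. Fix a non-negative integer $s \geq [\alpha - n(1-1/q)]$ and let $a$ be an arbitrary central $(\alpha, q, s; \omega_1, \omega_2)_0$-atom with ${\rm supp}(a) \subset B_{j_a}$. By Theorem \ref{apply} it suffices to prove
\[
\|\mathcal H_{\Phi,\Omega}(a)\|_{{\mathop{K}\limits^{.}}^{\alpha^*, p}_{q^*}(\omega_1,\omega_2)} \lesssim \|\Omega\|_{L^{q'}(S_{n-1})} \cdot \mathcal C_i
\]
uniformly in $a$, with $i = 5$ for $p = 1$ and $i = 6$ for $0 < p < 1$. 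As in (\ref{xichmabk}) I write $|\mathcal H_{\Phi,\Omega}(a)(x)| \leq \sum_{k\in\mathbb Z} \widetilde b_k(x)$ with ${\rm supp}(\widetilde b_k) \subset B_{j_a+k}$; after suitable normalization each $\widetilde b_k$ will be a dyadic central $(\alpha^*, q^*, \omega_1, \omega_2)$-unit, so the Herz norm estimate will follow from Theorem \ref{blockHerz} once the right bound on $\|\widetilde b_k\|_{L^{q^*}(\omega_2)}$ is in hand.

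The core computation is the estimate of $\|\widetilde b_k\|_{L^{q^*}(\omega_2, B_{j_a+k})}$. Because $\omega_2$ is now a general $A_1$ weight, the explicit polar-coordinate identity (\ref{esaLq}) is unavailable; instead I will pass through the Lebesgue norm. The condition $q > q^* r_{\omega_2}'$ is equivalent to $r := q/(q - q^*) < r_{\omega_2}$, so $\omega_2 \in RH_r$; H\"older's inequality with exponents $(r, r')$ combined with the reverse H\"older bound on $\omega_2$ then yields
\[
\|\widetilde b_k\|_{L^{q^*}(\omega_2, B_{j_a+k})} \lesssim |B_{j_a+k}|^{-1/q}\, \omega_2(B_{j_a+k})^{1/q^*}\, \|\widetilde b_k\|_{L^q(B_{j_a+k},dx)}.
\]
The unweighted $L^q$ norm is controlled, just as in the proof of Theorem \ref{Hardy-Herz1}, by Minkowski, H\"older on the sphere against $\Omega \in L^{q'}(S_{n-1})$, polar coordinates, and the change of variables $s = t^{-1} r$, producing a factor $t^{n/q} \|a\|_{L^q(dx)}$ inside the $t$-integral. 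The passage $\|a\|_{L^q(dx)} \lesssim (|B_{j_a}|/\omega_2(B_{j_a}))^{1/q} \|a\|_{L^q(\omega_2)}$ follows from the essinf bound $\omega_2(x)^{-1} \lesssim |B_{j_a}|/\omega_2(B_{j_a})$ on $B_{j_a}$ (an immediate consequence of $\omega_2 \in A_1$), and the atom property gives $\|a\|_{L^q(\omega_2)} \leq \omega_1(B_{j_a})^{-\alpha/n}$.

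Assembling these inequalities and pulling out $\omega_1(B_{j_a+k})^{-\alpha^*/n}$, the remaining prefactor $\nu_k$ reduces to
\[
\nu_k \;\lesssim\; 2^{-kn/q}\, \Big(\tfrac{\omega_2(B_{j_a+k})}{\omega_2(B_{j_a})}\Big)^{1/q^*} \Big(\tfrac{\omega_1(B_{j_a+k})}{\omega_1(B_{j_a})}\Big)^{\alpha^*/n} \Big(\tfrac{\omega_2(B_{j_a})}{\omega_1(B_{j_a})}\Big)^{1/q^* - 1/q},
\]
where the identity (\ref{qalpha}) (in the form $\alpha^*/n - \alpha/n = 1/q - 1/q^*$) has been used to collect the $B_{j_a}$-only terms; the last factor is $\lesssim 1$ by (\ref{omega2/omega1}) since $1/q^* > 1/q$. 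Proposition \ref{pro2.3DFan} then controls the two weight-ratios: for $k \geq 1$ the $A_1$ direction gives $\omega_i(B_{j_a+k})/\omega_i(B_{j_a}) \lesssim 2^{kn}$, whereas for $k \leq 0$ the $RH_{\delta_i}$ direction gives $\omega_i(B_{j_a+k})/\omega_i(B_{j_a}) \lesssim 2^{kn(\delta_i-1)/\delta_i}$. Using $n/q^* - n/q = \alpha - \alpha^*$, one checks directly that $\nu_k t^{n/q} \lesssim t^{\gamma_2}$ on $(2^{k-1},2^k]$ for $k \geq 1$, and $\nu_k t^{n/q} \lesssim t^{\gamma_1}$ on $(2^{k-1},2^k] \subset (0,1]$ for $k \leq 0$ (the latter bound comes out slightly sharper, which is harmless).

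It remains to sum. Setting $\mu_k := \int_{(2^{k-1},2^k]} \nu_k |\Phi(t)| t^{n/q-1}\, dt$, the normalization $\widetilde b_k/(\|\Omega\|_{L^{q'}(S_{n-1})}\mu_k)$ produces dyadic central $(\alpha^*, q^*, \omega_1,\omega_2)$-units in $B_{j_a+k}$, and Theorem \ref{blockHerz} gives
\[
\|\mathcal H_{\Phi,\Omega}(a)\|_{{\mathop{K}\limits^{.}}^{\alpha^*, p}_{q^*}(\omega_1,\omega_2)} \lesssim \|\Omega\|_{L^{q'}(S_{n-1})} \Big(\sum_{k\in\mathbb Z} |\mu_k|^p\Big)^{1/p}.
\]
For $p = 1$ this last sum is bounded directly by $\mathcal C_5$; for $p \in (0,1)$ with $\sigma > (1-p)/p$, the $|k|^\sigma$-H\"older trick of (\ref{lambdakp}) gives $(\sum |\mu_k|^p)^{1/p} \lesssim \sum |k|^\sigma |\mu_k| \lesssim \mathcal C_6$. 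The hardest piece of the argument is the chain $L^{q^*}(\omega_2) \to L^q(dx) \to L^q(\omega_2)$ for $\widetilde b_k$: it is exactly what forces the restriction $q > q^* r_{\omega_2}'$ (so that reverse H\"older is available with the right exponent), and it is only the combination of the index identity (\ref{qalpha}) with the weight-domination (\ref{omega2/omega1}) that telescopes the $B_{j_a}$-dependent terms into a bounded residual factor and isolates the desired $\omega_1(B_{j_a+k})^{-\alpha^*/n}$ pre-factor.
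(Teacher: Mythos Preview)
Your proposal is correct and follows essentially the same route as the paper's proof: reduce to a uniform atom estimate via Theorem~\ref{apply}, decompose $|\mathcal H_{\Phi,\Omega}(a)|$ into the dyadic pieces $\widetilde b_k$ of (\ref{xichmabk}), use the reverse H\"older condition (enabled by $q>q^*r_{\omega_2}'$) to pass from $L^{q^*}(\omega_2)$ to $L^q(dx)$ on $B_{j_a+k}$, convert back to $L^q(\omega_2)$ via the $A_1$ property, and then control the residual weight-ratio factor with Proposition~\ref{pro2.3DFan}, before feeding the resulting dyadic units into Theorem~\ref{blockHerz} and the $|k|^{\sigma}$-H\"older trick for $p<1$.

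The only differences are cosmetic. The paper applies Minkowski first and then reverse H\"older to the inner function $\|a(|\cdot|t^{-1}y')\|_{L^{q^*}(\omega_2,B_{j_a+k})}$, whereas you apply reverse H\"older directly to $\|\widetilde b_k\|_{L^{q^*}(\omega_2)}$ and Minkowski afterwards; both orders give the same bound. For the step $L^q(dx)\to L^q(\omega_2)$ the paper invokes Proposition~\ref{pro2.4DFan} on the ball $B^*_{t^{-1}2^{j_a+k}}$ and then uses $\omega_2(B^*_{t^{-1}2^{j_a+k}})^{-1/q}\le\omega_2(B_{j_a})^{-1/q}$, while you use the $A_1$ essinf bound on $B_{j_a}$ directly; again equivalent, since ${\rm supp}(a)\subset B_{j_a}$. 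Finally, your algebraic regrouping of the weight-ratio factor $\nu_k$ differs from the paper's decomposition of $\mathcal U_{\omega_1,\omega_2,k,j_a}$ into the three ratios of (\ref{Uomega1omega2}); your grouping happens to give exponents $n/q+\alpha$ (for $k\ge1$) and $\gamma_1+\alpha^*/\delta_2$ (for $k\le0$), both slightly sharper than the paper's $\gamma_2$ and $\gamma_1$, which, as you note, is harmless for the stated conclusion.
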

%%%%%%%%%%%%%%%%%%%%
\begin{proof}
We fix non negative integer $s\geq [\alpha-n(1-1/q)]$. Then, let $a$ be any central $(\alpha, q, s;\omega_1, \omega_2)_0$-atom with ${\rm supp}(a)\subset B_{j_a}$. To prove the theorem, it suffices to show that
\begin{align}\label{uniform3}
\|\mathcal H_{\Phi,\Omega}(a)\|_{{\mathop{K}\limits^{.}}^{\alpha^*,p}_{q^*}(\omega_1,\omega_2)}\lesssim \left\{ \begin{array}{l}
\mathcal C_5.\|\Omega\|_{L^{q'}(S_{n-1})},\,\,p=1,
\\
\mathcal C_6.\|\Omega\|_{L^{q'}(S_{n-1})},\,\,p\in (0,1)\,\textit{\rm and}\,\,\sigma>\frac{(1-p)}{p}.
\end{array} \right.
\end{align}
As mentioned above, we have  
$
\widetilde{b}_k(x)=\int_{(2^{k-1}, 2^k]}\int_{S_{n-1}} \dfrac{|\Phi(t)|}{t}|\Omega(y')||a(t^{-1}|x|y')|dy'dt,
$
with ${\rm supp}(\widetilde{b}_k)\subset B_{j_a+k}$.
By the Minkowski inequality, we infer
\begin{align}
\|\widetilde{b}_k\|_{L^{q^*}(\omega_2)}&=\Big(\int_{B_{j_a+k}}\Big|\int_{(2^{k-1}, 2^k]}\int_{S_{n-1}} \dfrac{\Phi(t)}{t}.\Omega(y').a(t^{-1}|x|y')dy'dt\Big|^{q^*}\omega_2(x)dx\Big)^{\frac{1}{q^*}}\nonumber
\\
&\leq \int_{(2^{k-1}, 2^k]}\frac{|\Phi(t)|}{t}\int_{S_{n-1}}|\Omega(y')|.\|a(|\cdot|t^{-1}y')\|_{L^{q^*}(\omega_2, B_{j_a+k})}dy'dt.\nonumber
\end{align}
Because we have $q>q^*r_{\omega_2}'$, there exists $r\in (1,r_{\omega_2})$ such that $q=q^* r'$, where $r'$ is conjugate real number of $r$. Thus, by the H\"{o}lder inequality and the reverse H\"{o}lder condition, we deduce that
\begin{align}
\|a(|\cdot|t^{-1}y')\|_{L^{q^*}(\omega_2, B_{j_a+k})}&\leq \|a(|\cdot|t^{-1}y')\|_{L^{q}(B_{j_a+k})}\Big(\int_{B_{j_a+k}}\omega_2^r(x)dx\Big)^{\frac{1}{rq^*}}\nonumber
\\
&\lesssim \|a(|\cdot|t^{-1}y')\|_{L^{q}(B_{j_a+k})}|B_{j_a+k}|^{\frac{-1}{q}}\omega_2(B_{j_a+k})^{\frac{1}{q^*}}.\nonumber
\end{align}
As a consequence, by the H\"{o}lder inequality and the argument as (\ref{esaLq}) above, we have
\begin{align}
\|\widetilde{b}_k\|_{L^{q^*}(\omega_2)}&\lesssim |B_{j_a+k}|^{\frac{-1}{q}}\omega_2(B_{j_a+k})^{\frac{1}{q^*}}\int_{(2^{k-1}, 2^k]}\frac{|\Phi(t)|}{t}\int_{S_{n-1}}|\Omega(y')|. \|a(|\cdot|t^{-1}y')\|_{L^{q}(B_{j_a+k})}dy'dt
\nonumber
\\
&\leq |B_{j_a+k}|^{\frac{-1}{q}}\omega_2(B_{j_a+k})^{\frac{1}{q^*}}\|\Omega\|_{L^{q'}(S_{n-1})}\int_{(2^{k-1}, 2^k]}\frac{|\Phi(t)|}{t}\Big(\int_{S_{n-1}}\int_{B_{j_a+k}} |a(|x|t^{-1}y')|^qdxdy'\Big)^{\frac{1}{q}}dt
\nonumber
\\
&\lesssim |B_{j_a+k}|^{\frac{-1}{q}}\omega_2(B_{j_a+k})^{\frac{1}{q^*}}\|\Omega\|_{L^{q'}(S_{n-1})}\int_{(2^{k-1}, 2^k]}\frac{|\Phi(t)|}{t}t^{\frac{n}{q}}\|a\|_{L^q(B^*_{t^{-1}2^{j_a+k}})}dt.\nonumber
\end{align}
By applying Proposition \ref{pro2.4DFan} and the inequality $\|a\|_{L^q(\omega_2)}\lesssim \omega_1(B_{j_a})^{\frac{-\alpha}{n}}$, one has
\begin{align}
\|a\|_{L^q(B^*_{t^{-1}2^{j_a+k}})}&\lesssim |B^*_{t^{-1}2^{j_a+k}}|^{\frac{1}{q}}\omega_2(B^*_{t^{-1}2^{j_a+k}})^{\frac{-1}{q}}\|a\|_{L^q(\omega_2)}
\lesssim |B^*_{t^{-1}2^{j_a+k}}|^{\frac{1}{q}}\omega_2(B^*_{t^{-1}2^{j_a+k}})^{\frac{-1}{q}}\omega_1(B_{j_a})^{\frac{-\alpha}{n}}.\nonumber
\end{align}
Therefore, by $\Big(\frac{|B^*_{t^{-1}2^{j_a +k}}|}{|B_{j_a+k}|}\Big)^{\frac{1}{q}}\simeq t^{\frac{-n}{q}}$, we obtain that
\begin{align}
\|\widetilde{b}_k\|_{L^{q*}(\omega_2)}\lesssim \|\Omega\|_{L^{q'}(S_{n-1})}\int_{(2^{k-1}, 2^k]}\frac{|\Phi(t)|}{t}\omega_2(B_{j_a+k})^{\frac{1}{q^*}}\omega_2(B^*_{t^{-1}2^{j_a+k}})^{\frac{-1}{q}}\omega_1(B_{j_a})^{\frac{-\alpha}{n}}dt.\nonumber
\end{align}
From $2^{k-1}<t\leq 2^k$, one has $\omega_2(B^*_{t^{-1}2^{j_a+k}})^{\frac{-1}{q}}\leq \omega_2(B_{j_a})^{\frac{-1}{q}}$. This infers
\begin{align}\label{widebkLq*}
\|\widetilde{b}_k\|_{L^{q*}(\omega_2)}\lesssim \omega_1(B_{j_a+k})^{\frac{-\alpha^*}{n}}.\|\Omega\|_{L^{q'}(S_{n-1})}\int_{(2^{k-1}, 2^k]}\frac{|\Phi(t)|}{t}.{\mathcal U}_{\omega_1,\omega_2,k,j_a}dt,
\end{align}
where ${\mathcal U}_{\omega_1,\omega_2,k,j_a}=\omega_1(B_{j_a+k})^{\frac{\alpha^*}{n}}\omega_2(B_{j_a+k})^{\frac{1}{q^*}}\omega_2(B_{j_a})^{\frac{-1}{q}}\omega_1(B_{j_a})^{\frac{-\alpha}{n}}.$
\\
Now, by (\ref{qalpha}) and (\ref{omega2/omega1}), we have
\begin{align}
{\mathcal U}_{\omega_1,\omega_2,k,j_a}&=\frac{\omega_2(B_{j_a+k})^{\frac{1}{q^*}+\frac{\alpha^*}{n}}}{\omega_2(B_{j_a})^{\frac{1}{q}+\frac{\alpha}{n}}}.\Big(\frac{\omega_2(B_{j_a})}{\omega_1(B_{j_a})}\Big)^{\frac{\alpha}{n}}.\Big(\frac{\omega_1(B_{j_a+k})}{\omega_2(B_{j_a+k})}\Big)^{\frac{\alpha^*}{n}}\nonumber
\\
&\lesssim \Big(\frac{\omega_2(B_{j_a+k})}{\omega_2(B_{j_a})}\Big)^{\frac{1}{q}+\frac{\alpha}{n}}.\Big(\frac{\omega_2(B_{j_a})}{\omega_1(B_{j_a})}\Big)^{\frac{\alpha^*}{n}}.\Big(\frac{\omega_1(B_{j_a+k})}{\omega_2(B_{j_a+k})}\Big)^{\frac{\alpha^*}{n}}\nonumber
\\
&=\Big(\frac{\omega_2(B_{j_a+k})}{\omega_2(B_{j_a})}\Big)^{\frac{1}{q}+\frac{\alpha}{n}}.\Big(\frac{\omega_2(B_{j_a})}{\omega_2(B_{j_a +k})}\Big)^{\frac{\alpha^*}{n}}.\Big(\frac{\omega_1(B_{j_a+k})}{\omega_1(B_{j_a})}\Big)^{\frac{\alpha^*}{n}}.\nonumber
\end{align}
By making Proposition \ref{pro2.3DFan}, we have
\begin{align}\label{Uomega1omega2}
{\mathcal U}_{\omega_1,\omega_2,k,j_a} = \left\{ \begin{array}{l}
\Big(\frac{|B_{j_a+k}|}{|B_{j_a}|}\Big)^{\frac{1}{q}+\frac{\alpha}{n}}.\Big(\frac{|B_{j_a}|}{|B_{j_a+k}|}\Big)^{\frac{(\delta_2-1)\alpha^*}{\delta_2.n}}.\Big(\frac{|B_{j_a+k}|}{|B_{j_a}|}\Big)^{\frac{\alpha^*}{n}}\lesssim 2^{kn\big(\frac{1}{q}+\frac{\alpha}{n}+ \frac{\alpha^*}{\delta_2.n}\big)}=2^{k\gamma_2},
\\
\textit{\rm if}\, k\geq 1,
\\
\Big(\frac{|B_{j_a+k}|}{|B_{j_a}|}\Big)^{\big(\frac{\delta_2-1}{\delta_2}\big).\big(\frac{1}{q}+\frac{\alpha}{n}\Big)}.\Big(\frac{|B_{j_a}|}{|B_{j_a+k}|}\Big)^{\frac{\alpha^*}{n}}.\Big(\frac{|B_{j_a+k}|}{|B_{j_a}|}\Big)^{\frac{(\delta_1-1)\alpha^*}{\delta_1.n}}\lesssim 2^{kn\big(\frac{(\delta_2-1)}{\delta_2}.(\frac{1}{q}+\frac{\alpha}{n})- \frac{\alpha^*}{\delta_1.n}\big)}=2^{k\gamma_1},
\\
\textit{\rm otherwise}.
\end{array} \right.
\end{align}
By (\ref{widebkLq*}) with $2^{k-1}< t\leq 2^{k}$, we lead to
\begin{align}
\|\widetilde{b}_k\|_{L^{q^*}(\omega_2)}&\lesssim \omega_1(B_{j_a+k})^{\frac{-\alpha^*}{n}}.\|\Omega\|_{L^{q'}(S_{n-1})}.
\left\{ \begin{array}{l}
\int_{(2^{k-1},2^k]}\frac{|\Phi(t)|}{t^{1-\gamma_2}}dt,\,\textit{\rm if}\,\,k\geq 1,
\\
\\
\int_{(2^{k-1},2^k]}\frac{|\Phi(t)|}{t^{1-\gamma_1}}dt,\,\textit{\rm otherwise},
\end{array} \right.
\nonumber
\\
&:= \omega_1(B_{j_a+k})^{\frac{-\alpha^*}{n}}.\|\Omega\|_{L^{q'}(S_{n-1})}.\mu^*_k.\nonumber
\end{align}
Next, we define $\widetilde{b}^{**}_{j_a,k}$ as follows
$
\widetilde{b}^{**}_{j_a,k} = \left\{ \begin{array}{l}
\dfrac{\widetilde{b}_k}{\|\Omega\|_{L^{q'}(S_{n-1})}.\mu^*_k}\,\textit{ \rm if }\, \mu^*_k\neq 0,
\\
0,\,\,\,\,\,\,\,\,\,\,\,\,\,\,\,\,\,\,\,\,\,\,\,\,\,\,\,\,\,\, \textit\,{\rm otherwise}.\nonumber
\end{array} \right.
$
\\
Consequently, one has 
$
\sum\limits_{k\in\mathbb Z} \widetilde{b}_k= \sum\limits_{k\in\mathbb Z}\|\Omega\|_{L^{q'}(S_{n-1})}\mu^*_k.\widetilde{b}^{**}_{j_a,k},
$ where $\widetilde{b}^{**}_{j_a,k}$ is a dyadic central $(\alpha^*, q^*,\omega_1,\omega_2)$-unit. Hence, by the inequality (\ref{xichmabk}) and Theorem \ref{blockHerz}, it immediately follows that
\begin{align}
\|\mathcal H_{\Phi,\Omega}(a)\|_{{\mathop{K}\limits^{.}}^{\alpha^*,p}_{q^*}(\omega_1,\omega_2)}\lesssim \|\Omega\|_{L^{q'}(S_{n-1})}\Big(\sum\limits_{k\in\mathbb Z} |\mu^*_k|^p\Big)^{\frac{1}{p}}.\nonumber
\end{align}
By estimating as the final part of the proof of Theorem \ref{Hardy-Herz1} above, we also have 
\begin{align}
&\Big(\sum\limits_{k\in\mathbb Z} |\mu^*_k|^p\Big)^{\frac{1}{p}}\lesssim \left\{ \begin{array}{l}
\mathcal C_5,\,\,p=1,
\\
\mathcal C_6,\,\,p\in (0,1)\,\textit{\rm and}\,\,\sigma>\frac{(1-p)}{p}.
\end{array} \right.\nonumber
\end{align}
This implies that the inequality (\ref{uniform3}) is right. Hence, the proof of this theorem is finished.
\end{proof}
%%%%%%%%%%%%%%%%%%%%%%%%%%%%%%%%%%%%%%%%%%%%%%
%%%%%%%%%%%%%%%%%%%%%%%%%%%%%%%%%%%%%%%%%%%%%%
%%%%%%%%%%%%%%%%%%%%%%%%%%%%%%%%%%%%%%%%%%%%%%
\section{The main results about the boundedness of ${{H}}_{\Phi,A}$}\label{4}
 For a matrix $A=(a_{ij})_{n\times n}$, we define the norm of $A$ as follows
$
\left\|A\right\|=\left(\sum\limits_{i,j=1}^{n}{{|a_{ij}|}^2}\right)^{1/2}.
$
\\
It is known that $\left|Ax\right|\leq \left\|A\right\|\left|x\right|$ for any vector $x\in\mathbb R^n$. In particular, if $A$ is invertible,  then we have
$$
\left\|A\right\|^{-n}\leq \left|\rm det(A^{-1})\right|\leq \left\|A^{-1}\right\|^{n}.
$$
\vskip 5pt
%Let $\Phi$ be a locally integrable function on $\mathbb R^n$. Recall that  the matrix Hausdorff operator $H_{\Phi,A}$ associated to the kernel function $\Phi$ is  defined by
%$$
%H_{\Phi, A}(f)(x)=\int\limits_{\mathbb R^n}{\frac{\Phi(y)}{|y|^n}f(A(y) x)dy},\,x\in\mathbb R^n,
%$$
%where $A(y)$ is an $n\times n$ invertible matrix for almost everywhere $y$ in the support of $\Phi$.
Our first main result in this section is as follows.
\begin{theorem}\label{MatrixHardy-Hardy}
Let $0<p\leq 1$, $1<q<\infty$, $1\leq \rho_A<\infty$, $\alpha\in [n(1-1/q),\infty)$, and $\omega_1=|x|^{\beta_1}$, $\omega_2=|x|^{\beta_2}$ with $\beta_1,\beta_2\in (-n,0]$, and there exist $m, M\in \mathbb Z$ such that  $2^m< \|A^{-1}(y)\|\leq 2^M$, a.e $y\in\rm supp(\Phi)$. Suppose that the following conditions are true:
\begin{align}\label{cond_matrix}
\|A^{-1}(y)\|.\|A(y)\|\leq \rho_A,\,\textit{ for all }\, y\in\,\textit{\rm supp}(\Phi),
\end{align}\label{Phi_matrix}
\begin{align}
\int_{\mathbb R^n}\frac{|\Phi(y)|}{|y|^n}dy<\infty.
\end{align}
Then, we have
$
\|H_{\Phi,A}(f)\|_{H{\mathop{K}\limits^{.}}^{\alpha,p}_{q}(\omega_1,\omega_2)}\lesssim \Big(\int_{\mathbb R^n}\frac{|\Phi(y)|}{|y|^n}dy\Big).\|f\|_{H{\mathop{K}\limits^{.}}^{\alpha,p}_{q}(\omega_1,\omega_2)},\,\,\textit{for all}\, f\in H{\mathop{K}\limits^{.}}^{\alpha,p}_{q}(\omega_1,\omega_2).
$
\end{theorem}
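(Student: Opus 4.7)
The plan is to follow the template of Theorem~\ref{Hardy-Hardy}: invoke Theorem~\ref{apply} to reduce matters to a uniform estimate on central atoms. Fix a nonnegative integer $s\geq [\alpha-n(1-1/q)]$ and let $a$ be any central $(\alpha,q,s;\omega_1,\omega_2)_0$-atom, so that ${\rm supp}(a)\subset B_{j_a}$, $\|a\|_{L^q(\omega_2)}\lesssim \omega_1(B_{j_a})^{-\alpha/n}$, $\int a(x)x^{\beta}dx=0$ for $|\beta|\leq s$, and $a\equiv 0$ on some $B^*_{r_a}$. The starting idea is to partition ${\rm supp}(\Phi)$ by dyadic shells of $\|A^{-1}(y)\|$: for $k=m+1,\dots,M$, set
$$E_k=\{y\in\mathbb R^n:2^{k-1}<\|A^{-1}(y)\|\leq 2^k\},\qquad b_k(x)=\int_{E_k}\dfrac{\Phi(y)}{|y|^n}a(A(y)x)\,dy,$$
so that $H_{\Phi,A}(a)=\sum_{k=m+1}^{M}b_k$.

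Next I would verify that each $b_k$, after normalization by an appropriate $\lambda_k$, is a central $(\alpha,q,s;\omega_1,\omega_2)_0$-atom supported in $B_{j_a+k}$. Condition~(i) on supports follows from $|x|\leq\|A^{-1}(y)\|\,|A(y)x|\leq 2^k\cdot 2^{j_a}$ whenever $a(A(y)x)\neq 0$. Condition~(iv) is the dual observation $|A(y)x|\leq\|A(y)\|\,|x|\leq\rho_A 2^{-(k-1)}|x|$, which forces $b_k\equiv 0$ on a small ball about the origin. For the moment condition~(iii), the change of variable $u=A(y)x$ turns $\int a(A(y)x)x^{\beta}dx$ into $|\det A^{-1}(y)|\int a(u)(A^{-1}(y)u)^{\beta}du$; the polynomial $(A^{-1}(y)u)^{\beta}$ expands into monomials $u^{\beta'}$ of exact degree $|\beta|\leq s$, each killed by $a$.

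The main step is the $L^q(\omega_2)$-bound (condition~(ii)). Applying Minkowski's inequality and then the change of variable $u=A(y)x$, and using $\beta_2\leq 0$ together with $|x|\geq|u|/\|A(y)\|$, one gets
$$\int_{\mathbb R^n}|a(A(y)x)|^q|x|^{\beta_2}\,dx\lesssim \|A(y)\|^{-\beta_2}\|A^{-1}(y)\|^n\|a\|^q_{L^q(\omega_2)}.$$
The hypothesis $\|A(y)\|\|A^{-1}(y)\|\leq\rho_A$ collapses $\|A(y)\|^{-\beta_2}$ into a constant multiple of $\|A^{-1}(y)\|^{\beta_2}$, so the right-hand side becomes $\lesssim\|A^{-1}(y)\|^{n+\beta_2}\|a\|^q_{L^q(\omega_2)}\sim 2^{k(n+\beta_2)}\|a\|^q_{L^q(\omega_2)}$ on $E_k$---the exact matrix analogue of~\eqref{esaLq}. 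Combining with the power-weight identity $(\omega_1(B_{j_a+k})/\omega_1(B_{j_a}))^{\alpha/n}\sim 2^{k(\beta_1+n)\alpha/n}$ from~\eqref{omega1power} yields $\|b_k\|_{L^q(\omega_2)}\lesssim \lambda_k\omega_1(B_{j_a+k})^{-\alpha/n}$ with
$$\lambda_k= 2^{k[(n+\beta_2)/q+(\beta_1+n)\alpha/n]}\int_{E_k}\dfrac{|\Phi(y)|}{|y|^n}\,dy.$$

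To conclude, $b_k/(C\lambda_k)$ is a central atom for a suitable constant $C$, so Theorem~\ref{equiv} gives $\|H_{\Phi,A}(a)\|_{H{\mathop{K}\limits^{.}}^{\alpha,p}_{q}(\omega_1,\omega_2)}\lesssim (\sum_{k=m+1}^{M}\lambda_k^p)^{1/p}$. Since the index range is finite, $(\sum\lambda_k^p)^{1/p}\lesssim\sum\lambda_k$, and because the exponential factor $2^{k[(n+\beta_2)/q+(\beta_1+n)\alpha/n]}$ is bounded on $k\in[m+1,M]$, this sum is $\lesssim\int_{\mathbb R^n}|\Phi(y)|/|y|^n\,dy$. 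Theorem~\ref{apply} then upgrades the uniform atomic estimate to the desired boundedness of $H_{\Phi,A}$ on $H{\mathop{K}\limits^{.}}^{\alpha,p}_{q}(\omega_1,\omega_2)$. The main obstacle, in my view, is this $L^q(\omega_2)$-bound: making the power weight $|x|^{\beta_2}$ interact gracefully with the nonlinear substitution $x\mapsto A(y)x$ requires essential use of both $\beta_2\leq 0$ and the two-sided control $\|A(y)\|\|A^{-1}(y)\|\leq\rho_A$, without which $\|A^{-1}(y)\|$ would not cleanly play the role of the scalar parameter $t$ in Theorem~\ref{Hardy-Hardy}.
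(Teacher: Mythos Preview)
Your proposal is correct and follows essentially the same route as the paper's proof: the same dyadic decomposition of ${\rm supp}(\Phi)$ according to $\|A^{-1}(y)\|$, the same verification of the four atom conditions (support via $|x|\leq\|A^{-1}(y)\|\,|A(y)x|$, vanishing near $0$ via $\|A(y)\|\leq\rho_A\|A^{-1}(y)\|^{-1}$, moments via the polynomial expansion of $(A^{-1}(y)u)^{\beta}$, and the $L^q(\omega_2)$ bound via the change of variable $u=A(y)x$), and the same appeal to Theorems~\ref{equiv} and~\ref{apply}. The only cosmetic difference is that you invoke \eqref{cond_matrix} and the inequality $|\det A^{-1}(y)|\leq\|A^{-1}(y)\|^n$ already in the $L^q(\omega_2)$ estimate to produce a clean $\lambda_k$, whereas the paper retains the factors $\|A(y)\|^{-\beta_2/q}|\det A^{-1}(y)|^{1/q}\|A^{-1}(y)\|^{\alpha+\beta_1\alpha/n}$ inside its coefficient $\theta_k$ and only applies these simplifications at the very end when summing; the two normalizations are comparable on each $E_k$ and lead to the same conclusion.
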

%%%%%%%%%%%%%%%%%%%%%
%%%%%%%%%%%%%%%%%%%%%
\begin{proof}
Similarly to the proof of Theorem \ref{Hardy-Hardy}, it suffices to prove that 
\begin{align}\label{matrix_uniform}
\|H_{\Phi,A}(a)\|_{H{\mathop{K}\limits^{.}}^{\alpha,p}_{q}(\omega_1,\omega_2)}\lesssim \int_{\mathbb R^n}\frac{|\Phi(y)|}{|y|^n}dy,
\end{align}
with non negative integer $s\geq [\alpha-n(1-1/q)]$ and for any central $(\alpha, q, s;\omega_1, \omega_2)_0$-atom $a$  with ${\rm supp}(a)\subset B_{j_a}$ and $\|a\|_{L^q(\omega_2)}\lesssim \omega_1(B_{j_a})^{\frac{-\alpha}{n}}$.
\vskip 5pt
Now, we write
$
H_{\Phi, A}(a)(x)= \sum\limits_{k=m+1}^M\,\int_{2^{k-1}<\|A^{-1}(y)\|\leq 2^k} \dfrac{\Phi(y)}{|y|^n }a(A(y)x)dy:= \sum\limits_{k=m+1}^M c_k(x).
$
\\
Note that ${\rm supp}(a)\subset B_{j_a}$. Let $x\in\mathbb R^n$ such that $|x|> 2^{j_a+k}$. By  $\|A^{-1}(y)\|\in (2^{k-1},2^k]$, we have
$$
|A(y)x|\geq \|A^{-1}(y)\|^{-1}|x|> 2^{-k}.2^{j_a+k}=2^{j_a}.
$$
This implies $c_k(x)=0$. From this, we immediately have
\begin{align}\label{matrix_suppck}
{\rm supp}(c_k)\subset B_{j_a+k}.
\end{align}
By using the Minkowski inequality, it gives
\begin{align}
\|c_k\|_{L^q(\omega_2)}&=\Big(\int_{\mathbb R^n}\Big|\,\int_{2^{k-1}<\|A^{-1}(y)\|\leq 2^k}\dfrac{\Phi(y)}{|y|^n}a(A(y)x)dy\Big|^q\omega_2(x)dx\Big)^{\frac{1}{q}}\nonumber
\\
&\leq \int_{2^{k-1}<\|A^{-1}(y)\|\leq 2^k}\frac{|\Phi(y)|}{|y|^n}\Big(\int_{\mathbb R^n} |a(A(y)x)|^q\omega_2(x)dx\Big)^{\frac{1}{q}}dy.\nonumber
\end{align}
From formula change of variables, $\beta_2\leq 0$ and $\|a\|_{L^q(\omega_2)}\lesssim\omega_1(B_{j_a})^{\frac{-\alpha}{n}}$, we get
\begin{align}%\label{matrix_esaLq}
&\Big(\int_{\mathbb R^n} |a(A(y)x)|^q\omega_2(x)dx\Big)^{\frac{1}{q}}=\Big(\int_{\mathbb R^n}|a(z)|^q|A^{-1}(y)z|^{\beta_2}|{\rm det}A^{-1}(y)|dz\Big)^{\frac{1}{q}}\nonumber
\\
&\leq \|A(y)\|^{\frac{-\beta_2}{q}}|{\rm det}A^{-1}(y)|^{\frac{1}{q}}\Big(\int_{\mathbb R^n}|a(z)|^q|z|^{\beta_2}dz\Big)^{\frac{1}{q}}\lesssim  \|A(y)\|^{\frac{-\beta_2}{q}}|{\rm det}A^{-1}(y)|^{\frac{1}{q}}\omega_1(B_{j_a})^{\frac{-\alpha}{n}}.\nonumber
\end{align}
This leads to
\begin{align}\label{matrix_ckLq}
&\|c_k\|_{L^q(\omega_2)}\lesssim \Big(\int_{2^{k-1}<\|A^{-1}(y)\|\leq 2^k}\frac{|\Phi(y)|}{|y|^n}\|A(y)\|^{\frac{-\beta_2}{q}}|{\rm det}A^{-1}(y)|^{\frac{1}{q}}dy\Big)\omega_1(B_{j_a+k})^{\frac{-\alpha}{n}}\Big(\frac{\omega_1(B_{j_a+k})}{\omega_1(B_{j_a})}\Big)^{\frac{\alpha}{n}}.
\end{align}
Consequently, by letting $\|A^{-1}(y)\|\in (2^{k-1},2^k]$ and having the inequality (\ref{omega1power}) above, we have
\begin{align}\label{matrix_normck}
\|c_k\|_{L^q(\omega_2)}&\lesssim  \Big(\int_{2^{k-1}<\|A^{-1}(y)\|\leq 2^k}\frac{|\Phi(y)|}{|y|^n}\|A(y)\|^{\frac{-\beta_2}{q}}|{\rm det}A^{-1}(y)|^{\frac{1}{q}}\|A^{-1}(y)\|^{\alpha+\frac{\beta_1\alpha}{n}}dy\Big)\omega_1(B_{j_a+k})^{\frac{-\alpha}{n}}
\nonumber
\\
&:=\theta_k.\omega_1(B_{j_a+k})^{\frac{-\alpha}{n}}.
\end{align}
For any $|\zeta|\leq s$, by  $c_k.x^{\zeta}\in L^1(\mathbb R^n)$, we get
\begin{align}
\int_{\mathbb R^n}c_k(x)x^{\zeta}dx&=\int_{\mathbb R^n}\,\Big(\int_{2^{k-1}<\|A^{-1}(y)\|\leq 2^k}\dfrac{\Phi(y)}{|y|^n}a(A(y)x)dy\Big)x^{\zeta}dx=\int_{2^{k-1}<\|A^{-1}(y)\|\leq 2^k}\dfrac{\Phi(y)}{|y|^n}\Big(\int_{\mathbb R^n}a(A(y)x)x^{\zeta}dx\Big)dy.\nonumber
\end{align}
Remark that by $\int_{\mathbb R^n}a(x)x^{\gamma}dx=0$ for all $|\gamma|\leq s$,  we deduce that
\begin{align}
\int_{\mathbb R^n}a(A(y)x)x^{\zeta}dx &=\int_{\mathbb R^n}a(z).(A^{-1}(y)z)^{\zeta}.|{\rm det} A^{-1}(y)|dz=|{\rm det} A^{-1}(y)|\int_{\mathbb R^n}a(z).(\sum\limits_{|\gamma|\leq |\zeta|}d_{A^{-1}(y),\gamma}.z^{\gamma})dz =0.\nonumber
\end{align}
Hence, one has
\begin{align}\label{matrix_inck=0}
\int_{\mathbb R^n}a(A(y)x)x^{\zeta}dx=0,\,\textit{\rm for all}\, |\zeta|\leq s.
\end{align}
Also, note that there exists $r_a\in\mathbb Z$ satisfying $a=0$ on $B_{r_a}$. By letting $x\in\mathbb R^n$ such that $|x|\leq \frac{r_a.2^{k-1}}{\rho_A}$ and assuming (\ref{cond_matrix}) and $\|A^{-1}(y)\|\in (2^{k-1},2^k]$, we have
$$
|A(y)x|\leq \|A(y)\|.|x|\leq \rho_A\|A^{-1}(y)\|^{-1}.|x|<\rho_A.2^{-k+1}.\frac{r_a.2^{k-1}}{\rho_A}=r_a,
$$
which implies $c_k(x)=0$. Thus,
\begin{align}\label{matrix_neighck}
c_k=0\,\textit{ \rm on }\, B_{\frac{r_a.2^{k-1}}{\rho_A}}.
\end{align}
Denote
$
c_{j_a,k} = \left\{ \begin{array}{l}
\dfrac{c_k}{\theta_k},\textit{ \rm if }\, \theta_k\neq 0,
\\
0,\,\,\,\textit\,{\rm otherwise}.\nonumber
\end{array} \right.
$
\\
We have
$$
H_{\Phi, A}(a)(x)= \sum\limits_{k=m+1}^M \theta_k. c_{j_a,k}(x).
$$
Note that, by (\ref{matrix_suppck}), (\ref{matrix_normck}), (\ref{matrix_inck=0}) and (\ref{matrix_neighck}), it is clear to see that $c_{j_a,k}$ is also central $(\alpha, q,s; \omega_1,\omega_2)_0$-atom. Hence $
H_{\Phi,A}(a)\in {\mathop{F}\limits^.}^{\alpha, q, s}_p(\omega_1,\omega_2)$. From these, by applying Theorem \ref{equiv} and (\ref{cond_matrix}), and having $|{\rm det} A^{-1}(y)|\leq \|A^{-1}(y)\|^n$, we estimate
\begin{align}
&\|H_{\Phi,A}(a)\|_{H{\mathop{K}\limits^.}^{\alpha, p}_q(\omega_1,\omega_2)}
\lesssim \|H_{\Phi, A}(a)\|_{{\mathop{F}\limits^.}^{\alpha, q, s}_p(\omega_1,\omega_2)}\leq \Big(\sum\limits_{k=m+1}^M |\theta_k|^p\Big)^{\frac{1}{p}}\lesssim \sum\limits_{k=m+1}^M |\theta_k|\nonumber
\\
&=\sum\limits_{k=m+1}^M\,\int_{2^{k-1}<\|A^{-1}(y)\|\leq 2^k}\frac{|\Phi(y)|}{|y|^n}\|A(y)\|^{\frac{-\beta_2}{q}}|{\rm det}A^{-1}(y)|^{\frac{1}{q}}\|A^{-1}(y)\|^{\alpha+\frac{\beta_1\alpha}{n}}dy\nonumber
\\
&\lesssim \sum\limits_{k=m+1}^M \,\int_{2^{k-1}<\|A^{-1}(y)\|\leq 2^k}\frac{|\Phi(y)|}{|y|^n}dy=\int_{\mathbb R^n}\dfrac{|\Phi(y)|}{|y|^n}dy.\nonumber
\end{align}
 This implies that the inequality (\ref{matrix_uniform}) is valid. 
Therefore, the proof of the theorem is completed.
\end{proof}
%%%%%%%%%%%%%%%%%%
%%%%%%%%%%%%%%%%%%%%
\begin{theorem}\label{matrix_Hardy-Herz}
Let $1<q<\infty, \alpha\in [n(1-1/q),\infty)$, $\omega_1=|x|^{\beta_1}$, $\omega_2=|x|^{\beta_2}$ with $\beta_1,\beta_2\in (-n,0]$.
\\
{\rm (i)} If $p=1$ and 
$$
\mathcal C_7= \int_{\mathbb R^n}\frac{|\Phi(y)|}{|y|^n}\|A(y)\|^{\frac{-\beta_2}{q}}|{\rm det}A^{-1}(y)|^{\frac{1}{q}}\|A^{-1}(y)\|^{\alpha+\frac{\beta_1\alpha}{n}}dy <\infty,
$$
then we have
$
\|H_{\Phi, A}(f)\|_{{\mathop{K}\limits^{.}}^{\alpha,1}_{q}(\omega_1,\omega_2)}\lesssim \mathcal C_7.\|f\|_{H{\mathop{K}\limits^{.}}^{\alpha, 1}_{q}(\omega_1,\omega_2)},\,\,\textit{for all}\, f\in H{\mathop{K}\limits^{.}}^{\alpha, 1}_{q}(\omega_1,\omega_2).
$
\\
{\rm (ii)} If $0<p<1$, $\sigma>\frac{1-p}{p}$, and
\begin{align}
\mathcal C_8 &= \int_{\mathbb R^n}\frac{|\Phi(y)|}{|y|^n}\|A(y)\|^{\frac{-\beta_2}{q}}|{\rm det}A^{-1}(y)|^{\frac{1}{q}}\|A^{-1}(y)\|^{\alpha+\frac{\beta_1\alpha}{n}}
\times\nonumber
\\
&\times\Big(\chi_{\{\|A^{-1}(y)\|\leq 1\}}(y).|{\rm log}_2\|A^{-1}(y)\||^{\sigma} + \chi_{\{\|A^{-1}(y)\|> 1\}}(y).\big({\rm log}_2\|A^{-1}(y)\| +1 \big)^{\sigma}\Big)dy<\infty,\nonumber
\end{align}
then we have
$
\|H_{\Phi, A}(f)\|_{{\mathop{K}\limits^{.}}^{\alpha,p}_{q}(\omega_1,\omega_2)}\lesssim C_8.\|f\|_{H{\mathop{K}\limits^{.}}^{\alpha, p}_{q}(\omega_1,\omega_2)},\,\,\textit{ for all}\, f\in H{\mathop{K}\limits^{.}}^{\alpha, p}_{q}(\omega_1,\omega_2).
$
\end{theorem}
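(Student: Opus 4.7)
The plan is to mirror the approach of Theorems \ref{Hardy-Herz} and \ref{Hardy-Herz1}, adapted to the matrix Hausdorff setting already developed in Theorem \ref{MatrixHardy-Hardy}, and to reduce the boundedness claim via Theorem \ref{apply} to a uniform estimate on atoms. Fixing a nonnegative integer $s \geq [\alpha - n(1-1/q)]$ and letting $a$ be an arbitrary central $(\alpha,q,s;\omega_1,\omega_2)_0$-atom with $\mathrm{supp}(a) \subset B_{j_a}$, it suffices to prove
\[
\|H_{\Phi,A}(a)\|_{\dot{K}^{\alpha,p}_q(\omega_1,\omega_2)} \lesssim \begin{cases} \mathcal{C}_7, & p = 1,\\ \mathcal{C}_8, & 0 < p < 1,\ \sigma > (1-p)/p. \end{cases}
\]
Since the target is now the Herz space rather than the Herz-type Hardy space, no vanishing-moment or vanishing-near-origin conditions need be checked for the pieces of the decomposition; I can work directly with dyadic central units via Theorem \ref{blockHerz}.

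The key step is the dyadic decomposition of $H_{\Phi,A}(a)$ over the annuli of $\|A^{-1}(y)\|$. Writing
\[
c_k(x) = \int_{2^{k-1} < \|A^{-1}(y)\| \leq 2^k} \frac{\Phi(y)}{|y|^n}\, a(A(y)x)\,dy,
\]
and reproducing the change-of-variables, $\beta_2 \leq 0$, atomic normalization, and power-weight computation that led to \eqref{matrix_normck} in the proof of Theorem \ref{MatrixHardy-Hardy}, one obtains both $\mathrm{supp}(c_k) \subset B_{j_a+k}$ and
\[
\|c_k\|_{L^q(\omega_2)} \lesssim \theta_k \cdot \omega_1(B_{j_a+k})^{-\alpha/n},
\]
with
\[
\theta_k = \int_{2^{k-1} < \|A^{-1}(y)\| \leq 2^k} \frac{|\Phi(y)|}{|y|^n}\, \|A(y)\|^{-\beta_2/q}\,|\det A^{-1}(y)|^{1/q}\,\|A^{-1}(y)\|^{\alpha + \beta_1\alpha/n}\,dy.
\]
Thus, up to a multiplicative constant, each $c_k/\theta_k$ is a dyadic central $(\alpha,q,\omega_1,\omega_2)$-unit supported in $B_{j_a+k}$, and Theorem \ref{blockHerz} yields
\[
\|H_{\Phi,A}(a)\|_{\dot{K}^{\alpha,p}_q(\omega_1,\omega_2)} \lesssim \Bigl(\sum_{k\in\mathbb{Z}} |\theta_k|^p\Bigr)^{1/p}.
\]

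For $p = 1$, reassembling the annular integrals trivially gives $\sum_k \theta_k = \mathcal{C}_7$. For $0 < p < 1$ with $\sigma > (1-p)/p$, I would apply H\"older's inequality exactly as in \eqref{lambdakp} to obtain $(\sum_k |\theta_k|^p)^{1/p} \lesssim \sum_k |k|^\sigma |\theta_k|$, split the sum at $k=0$, and on the annulus $2^{k-1} < \|A^{-1}(y)\| \leq 2^k$ bound $|k| \leq \log_2 \|A^{-1}(y)\|+1$ for $k \geq 1$ and $|k| \leq |\log_2 \|A^{-1}(y)\||$ for $k \leq 0$; rejoining the integrals then produces exactly $\mathcal{C}_8$. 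The only mildly delicate point in the whole scheme is transferring the discrete logarithmic weight $|k|^\sigma$ to a factor depending on $\|A^{-1}(y)\|$ inside the integrand while correctly handling the split at the unit level set $\{\|A^{-1}(y)\| = 1\}$; since this mirrors precisely the corresponding step in Theorems \ref{Hardy-Herz} and \ref{Hardy-Herz1}, it is essentially routine.
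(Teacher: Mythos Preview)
Your proposal is correct and follows essentially the same approach as the paper's proof. The only cosmetic difference is that the paper takes absolute values inside the dyadic pieces, writing $\widetilde{c}_k(x)=\int_{2^{k-1}<\|A^{-1}(y)\|\leq 2^k}\frac{|\Phi(y)|}{|y|^n}|a(A(y)x)|\,dy$ and bounding $|H_{\Phi,A}(a)|\leq\sum_k\widetilde{c}_k$ before invoking Theorem~\ref{blockHerz}, whereas you decompose with the signed $c_k$; this changes nothing in the estimates or the logic.
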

%%%%%%%%%%%%%%%%%%%%%
\begin{proof}
Let us fix non negative integer $s\geq [\alpha-n(1-1/q)]$ and $a$ be any central $(\alpha, q, s;\omega_1, \omega_2)_0$-atom with ${\rm supp}(a)\subset B_{j_a}$ and $\|a\|_{L^q(\omega_2)}\lesssim \omega_1(B_{j_a})^{\frac{-\alpha}{n}}$.
To complete the proof of the theorem, it suffices to prove that the following inequality holds
\begin{align}\label{matrix_uniform1}
\|H_{\Phi, A}(a)\|_{{\mathop{K}\limits^{.}}^{\alpha,p}_{q}(\omega_1,\omega_2)}\lesssim \left\{ \begin{array}{l}
\mathcal C_7,\,\,p=1,
\\
\mathcal C_8,\,\,p\in (0,1)\,\textit{\rm and}\,\,\sigma>\frac{(1-p)}{p}.
\end{array} \right.
\end{align}
Now, we will decompose as follows
\begin{align}\label{xichmack}
|H_{\Phi,A}(a)(x)|&\leq\sum\limits_{k\in\mathbb Z}\,\int_{2^{k-1}<\|A^{-1}(y)\|\leq 2^k} \dfrac{|\Phi(y)|}{|y|^n}|a(A(y)x)|dy:= \sum\limits_{k\in\mathbb Z} \widetilde{c_k}(x).
\end{align}
From (\ref{matrix_suppck}) and (\ref{matrix_normck}), we instantly have
\begin{align}\label{matrix_suppnormck}
{\rm supp}(\widetilde{c}_k)\subset B_{j_a+k}\,\textit{ \rm and }\,\|\widetilde{c}_k\|_{L^q(\omega_2)}\lesssim \theta_k.\omega_1(B_{j_a+k})^{\frac{-\alpha}{n}}.
\end{align}
By setting up $\widetilde{c}_{j_a,k}$ as follows
$
\widetilde{c}_{j_a,k} = \left\{ \begin{array}{l}
\dfrac{\widetilde{c}_k}{\theta_k},\textit{\rm if }\, \theta_k\neq 0,
\\
0,\,\,\textit\,{\rm otherwise},\nonumber
\end{array} \right.
$
\\
we have 
$\sum\limits_{k\in\mathbb Z} \widetilde{c}_k= \sum\limits_{k\in\mathbb Z}\theta_k.\widetilde{c}_{j_a,k},
$ 
where 
$
\theta_k=\int_{2^{k-1}<\|A^{-1}(y)\|\leq 2^k}\frac{|\Phi(y)|}{|y|^n}\|A(y)\|^{\frac{-\beta_2}{q}}|{\rm det}A^{-1}(y)|^{\frac{1}{q}}\|A^{-1}(y)\|^{\alpha+\frac{\beta_1\alpha}{n}}dy.
$
\\
From (\ref{matrix_suppnormck}), it is easy to see that $\widetilde{c}_{j_a,k}$ is a dyadic central $(\alpha, q,\omega_1,\omega_2)$-unit. As a consequence, by Theorem \ref{blockHerz}, we lead to
\begin{align}
\|H_{\Phi, A}(a)\|_{{\mathop{K}\limits^{.}}^{\alpha,p}_{q}(\omega_1,\omega_2)}\leq \|\sum\limits_{k\in\mathbb Z} \widetilde{c_k}\|_{{\mathop{K}\limits^{.}}^{\alpha,p}_{q}(\omega_1,\omega_2)}\lesssim \Big(\sum\limits_{k\in\mathbb Z} |\theta_k|^p\Big)^{\frac{1}{p}}.\nonumber
\end{align}
Case 1: $p=1$. It follows that
\begin{align}
\sum\limits_{k\in\mathbb Z} |\theta_k| &=\sum\limits_{k\in\mathbb Z}\,\int_{2^{k-1}<\|A^{-1}(y)\|\leq 2^k}\frac{|\Phi(y)|}{|y|^n}\|A(y)\|^{\frac{-\beta_2}{q}}|{\rm det}A^{-1}(y)|^{\frac{1}{q}}\|A^{-1}(y)\|^{\alpha+\frac{\beta_1\alpha}{n}}dy=\mathcal C_7.\nonumber
\end{align}
Case 2: $p\in (0,1)$ and $\sigma>\frac{1-p}{p}$. By the H\"{o}lder inequality again, we also have
\begin{align}
&\Big(\sum\limits_{k\in\mathbb Z} |\theta_k|^p\Big)^{\frac{1}{p}}\lesssim \sum\limits_{k\in\mathbb Z} |k|^{\sigma}|\theta_k|=\sum\limits_{k=1}^{\infty}\,\int_{2^{k-1}<\|A^{-1}(y)\|\leq 2^k}\frac{|\Phi(y)|}{|y|^n}\|A(y)\|^{\frac{-\beta_2}{q}}|{\rm det}A^{-1}(y)|^{\frac{1}{q}}\|A^{-1}(y)\|^{\alpha+\frac{\beta_1\alpha}{n}}|k|^{\sigma}dy + \nonumber
\\
&+\sum\limits_{k=-\infty}^0\,\int_{2^{k-1}<\|A^{-1}(y)\|\leq 2^k}\frac{|\Phi(y)|}{|y|^n}\|A(y)\|^{\frac{-\beta_2}{q}}|{\rm det}A^{-1}(y)|^{\frac{1}{q}}\|A^{-1}(y)\|^{\alpha+\frac{\beta_1\alpha}{n}}|k|^{\sigma}dy\nonumber
\\
&\lesssim \sum\limits_{k=1}^{\infty} \,\int_{2^{k-1}<\|A^{-1}(y)\|\leq 2^k}\frac{|\Phi(y)|}{|y|^n}\|A(y)\|^{\frac{-\beta_2}{q}}|{\rm det}A^{-1}(y)|^{\frac{1}{q}}\|A^{-1}(y)\|^{\alpha+\frac{\beta_1\alpha}{n}}({\rm log}_2\|A^{-1}(y)\|+1)^{\sigma}dy\,+\nonumber
\\
&+\sum\limits_{k=-\infty}^0 \,\int_{2^{k-1}<\|A^{-1}(y)\|\leq 2^k}\frac{|\Phi(y)|}{|y|^n}\|A(y)\|^{\frac{-\beta_2}{q}}|{\rm det}A^{-1}(y)|^{\frac{1}{q}}\|A^{-1}(y)\|^{\alpha+\frac{\beta_1\alpha}{n}}|{\rm log}_2\|A^{-1}(y)\||^{\sigma}dy= \mathcal C_8\nonumber.
\end{align}
Thus, the inequality (\ref{matrix_uniform1}) is achieved. Therefore, the theorem is completely proved.
\end{proof}
%%%%%%%%%%%%%%%%%%%%%
%%%%%%%%%%%%%%%%%%%%%%%%
\begin{theorem}\label{matrix_Hardy-Herz1}
Suppose $1<q<\infty, \alpha\in [n(1-1/q),\infty)$, $\omega_2=|x|^{\beta_2}$ with $\beta_2\in (-n,0]$ and $\omega_1\in A_1$ with the finite critical index $r_{\omega_1}$ for the reverse H\"{o}lder condition and $\delta\in (1,r_{\omega_1})$.
\\
{\rm(i)} If $p=1$ and 
\begin{align}
\mathcal C_9 &= \int_{\mathbb R^n}\frac{|\Phi(y)|}{|y|^n}\|A(y)\|^{\frac{-\beta_2}{q}}|{\rm det}A^{-1}(y)|^{\frac{1}{q}}\times\nonumber
\\
&\,\,\,\times\Big(\chi_{\{\|A^{-1}(y)\|\leq 1\}}(y).\|A^{-1}(y)\|^{\frac{\alpha(\delta-1)}{\delta}}+ \chi_{\{\|A^{-1}(y)\|>1\}}(y).\|A^{-1}(y)\|^{\alpha}\Big)dy<\infty,\nonumber
\end{align}
then we have
$
\|H_{\Phi, A}(f)\|_{{\mathop{K}\limits^{.}}^{\alpha,1}_{q}(\omega_1,\omega_2)}\lesssim\mathcal C_9.\|f\|_{H{\mathop{K}\limits^{.}}^{\alpha, 1}_{q}(\omega_1,\omega_2)},\,\,\textit{for all}\, f\in H{\mathop{K}\limits^{.}}^{\alpha, 1}_{q}(\omega_1,\omega_2).
$
\\
 {\rm(ii)} If $0<p<1$, $\sigma>\frac{1-p}{p}$ and
\begin{align}
\mathcal C_{10} &=\int_{\mathbb R^n}\frac{|\Phi(y)|}{|y|^n}\|A(y)\|^{\frac{-\beta_2}{q}}|{\rm det}A^{-1}(y)|^{\frac{1}{q}}\Big(\chi_{\{\|A^{-1}(y)\|\leq 1\}}(y).\|A^{-1}(y)\|^{\frac{\alpha(\delta-1)}{\delta}}|{\rm log}_2\|A^{-1}(y)\||^{\sigma} +\nonumber
\\
&\,\,\,\,\,\,\,\,\,+ \chi_{\{\|A^{-1}(y)\|>1\}}(y).\|A^{-1}(y)\|^{\alpha}\big({\rm log}_2\|A^{-1}(y)\| +1 \big)^{\sigma}\Big)dt<\infty,\nonumber
\end{align}
then we have
$
\|H_{\Phi, A}(f)\|_{{\mathop{K}\limits^{.}}^{\alpha,p}_{q}(\omega_1,\omega_2)}\lesssim\mathcal C_{10}.\|f\|_{H{\mathop{K}\limits^{.}}^{\alpha, p}_{q}(\omega_1,\omega_2)},\,\,\textit{for all}\, f\in H{\mathop{K}\limits^{.}}^{\alpha, p}_{q}(\omega_1,\omega_2).
$
\end{theorem}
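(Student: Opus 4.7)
The plan is to mirror the proof of Theorem \ref{matrix_Hardy-Herz}, but with the key modification from the proof of Theorem \ref{Hardy-Herz1}: since $\omega_1 \in A_1$ is no longer a power weight, one cannot compute $\omega_1(B_{j_a+k})/\omega_1(B_{j_a})$ explicitly as in (\ref{omega1power}). Instead, Proposition \ref{pro2.3DFan} is invoked, and a case split on the sign of $k$ produces two different growth rates for this ratio.

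By Theorem \ref{apply}, it suffices to establish the uniform estimates
\[
\|H_{\Phi,A}(a)\|_{{\mathop{K}\limits^{.}}^{\alpha,p}_{q}(\omega_1,\omega_2)} \lesssim \mathcal{C}_9 \text{ (resp. } \mathcal{C}_{10} \text{)}
\]
for every central $(\alpha,q,s;\omega_1,\omega_2)_0$-atom $a$ with ${\rm supp}(a) \subset B_{j_a}$ and $\|a\|_{L^q(\omega_2)} \lesssim \omega_1(B_{j_a})^{-\alpha/n}$. As in (\ref{xichmack}), decompose dyadically with respect to $\|A^{-1}(y)\|$: write
\[
|H_{\Phi,A}(a)(x)| \leq \sum_{k\in\mathbb Z}\widetilde{c}_k(x), \quad \widetilde{c}_k(x) = \int_{2^{k-1}<\|A^{-1}(y)\|\leq 2^k}\frac{|\Phi(y)|}{|y|^n}|a(A(y)x)|dy,
\]
so that ${\rm supp}(\widetilde{c}_k) \subset B_{j_a+k}$ by the same argument as for (\ref{matrix_suppck}).

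Next, I would estimate $\|\widetilde{c}_k\|_{L^q(\omega_2)}$ using the Minkowski inequality and the change of variables $z=A(y)x$ exactly as in (\ref{matrix_ckLq}), obtaining
\[
\|\widetilde{c}_k\|_{L^q(\omega_2)} \lesssim \Big(\int_{2^{k-1}<\|A^{-1}(y)\|\leq 2^k}\frac{|\Phi(y)|}{|y|^n}\|A(y)\|^{-\beta_2/q}|{\rm det}A^{-1}(y)|^{1/q}dy\Big)\omega_1(B_{j_a+k})^{-\alpha/n}\Big(\frac{\omega_1(B_{j_a+k})}{\omega_1(B_{j_a})}\Big)^{\alpha/n}.
\]
Now the crucial replacement for (\ref{omega1power}): by Proposition \ref{pro2.3DFan} applied with $\omega_1 \in A_1 \cap RH_\delta$, as in (\ref{omega1A1}), one has $(\omega_1(B_{j_a+k})/\omega_1(B_{j_a}))^{\alpha/n} \lesssim 2^{k\alpha}$ when $k\geq 1$ and $\lesssim 2^{k\alpha(\delta-1)/\delta}$ when $k\leq 0$. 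Combining with $\|A^{-1}(y)\| \in (2^{k-1},2^k]$, these geometric factors are absorbed into $\|A^{-1}(y)\|^{\alpha}$ (resp. $\|A^{-1}(y)\|^{\alpha(\delta-1)/\delta}$), yielding $\|\widetilde{c}_k\|_{L^q(\omega_2)} \lesssim \mu^*_k \cdot \omega_1(B_{j_a+k})^{-\alpha/n}$, where $\mu^*_k$ is the corresponding piecewise integral.

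Then I would normalize: $\widetilde{c}_k/\mu^*_k$ is a dyadic central $(\alpha,q,\omega_1,\omega_2)$-unit (supported in $B_{j_a+k}$ with the correct size), so Theorem \ref{blockHerz} gives
\[
\|H_{\Phi,A}(a)\|_{{\mathop{K}\limits^{.}}^{\alpha,p}_{q}(\omega_1,\omega_2)} \lesssim \Big(\sum_{k\in\mathbb Z}|\mu^*_k|^p\Big)^{1/p}.
\]
For $p=1$ the sum telescopes to $\mathcal{C}_9$ directly. For $p\in(0,1)$ with $\sigma>(1-p)/p$, apply the Hölder trick already used in (\ref{lambdakp}): bound $(\sum |\mu^*_k|^p)^{1/p} \lesssim \sum |k|^\sigma |\mu^*_k|$ and use $|k|^\sigma \lesssim ({\rm log}_2\|A^{-1}(y)\|+1)^\sigma$ for $k\geq 1$ (resp. $|{\rm log}_2\|A^{-1}(y)\||^\sigma$ for $k\leq 0$) to obtain $\mathcal{C}_{10}$.

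The main obstacle is bookkeeping the case split: one must verify that the reverse Hölder exponent $\delta$ gets applied only on $\{\|A^{-1}(y)\|\leq 1\}$ while the straight $A_1$ estimate is used on $\{\|A^{-1}(y)\|>1\}$, so that the two pieces align precisely with the indicator functions in the definitions of $\mathcal{C}_9$ and $\mathcal{C}_{10}$. Everything else is a direct transcription of the arguments from Theorems \ref{Hardy-Herz1} and \ref{matrix_Hardy-Herz}.
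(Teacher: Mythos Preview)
Your proposal is correct and follows essentially the same approach as the paper: dyadic decomposition in $\|A^{-1}(y)\|$, the $L^q(\omega_2)$ estimate (\ref{matrix_ckLq}) combined with the $A_1$/reverse H\"older bound (\ref{omega1A1}) on $(\omega_1(B_{j_a+k})/\omega_1(B_{j_a}))^{\alpha/n}$, normalization to dyadic central units, Theorem \ref{blockHerz}, and the H\"older trick for $p<1$. The paper names your coefficients $\eta_k$ rather than $\mu^*_k$, but otherwise the arguments coincide.
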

%%%%%%%%%%%%%%%%%%%
\begin{proof}
To prove this theorem, it suffices to obtain the following estimation  
\begin{align}
\|H_{\Phi, A}(a)\|_{{\mathop{K}\limits^{.}}^{\alpha,p}_{q}(\omega_1,\omega_2)}\lesssim \left\{ \begin{array}{l}
\mathcal C_9,\,\,p=1,
\\
\mathcal C_{10},\,p\in (0,1)\,\textit{\rm and}\,\,\sigma>\frac{(1-p)}{p},
\end{array} \right.\nonumber
\end{align}
where non negative integer $s\geq [\alpha-n(1-1/q)]$ and $a$ is any central $(\alpha, q, s;\omega_1, \omega_2)_0$-atom with ${\rm supp}(a)\subset B_{j_a}$.
Recall that 
$\widetilde{c}_k(x)=\int_{2^{k-1}<\|A^{-1}(y)\|\leq 2^k} \dfrac{|\Phi(y)|}{|y|^n}|a(A(y)x)|dy\,$ with ${\rm supp}(\widetilde{c_k})\subset B_{j_a+k}.$
\\
Now, by (\ref{matrix_ckLq}) and (\ref{omega1A1}), it follows that
\begin{align}
\|\widetilde{c}_k\|_{L^q(\omega_2)}&\lesssim \omega_1(B_{j_a+k})^{\frac{-\alpha}{n}}
\left\{ \begin{array}{l}
\int_{2^{k-1}<\|A^{-1}(y)\|\leq 2^k}\frac{|\Phi(y)|}{|y|^n}\|A(y)\|^{\frac{-\beta_2}{q}}|{\rm det}A^{-1}(y)|^{\frac{1}{q}}\|A^{-1}(y)\|^{\alpha}dy,\,\,\,\,\,\,\,\,\textit{\rm if}\,\,k\geq 1,
\\
\\
\int_{2^{k-1}<\|A^{-1}(y)\|\leq 2^k}\frac{|\Phi(y)|}{|y|^n}\|A(y)\|^{\frac{-\beta_2}{q}}|{\rm det}A^{-1}(y)|^{\frac{1}{q}}\|A^{-1}(y)\|^{\frac{\alpha(\delta-1)}{\delta}}dy, \textit{\rm otherwise},
\end{array} \right.
\nonumber
\\
&:= \omega_1(B_{j_a+k})^{\frac{-\alpha}{n}}.{\eta}_k.\nonumber
\end{align}
We denote $\widetilde{c}^*_{j_a,k}$ as follows
$
\widetilde{c}^*_{j_a,k} = \left\{ \begin{array}{l}
\dfrac{\widetilde{c}_k}{{\eta}_k},\,\textit{\rm if}\,\, \eta_k\neq 0,
\\
0,\,\,\,\textit\,{\rm otherwise}.\nonumber
\end{array} \right.
$
\\
Thus, we have
$
\sum\limits_{k\in\mathbb Z} \widetilde{c}_k= \sum\limits_{k\in\mathbb Z} \eta_k.\widetilde{c}^*_{j_a,k},
$ where $\widetilde{c^*}_{j_a,k}$ is a dyadic central $(\alpha, q,\omega_1,\omega_2)$-unit. Hence, by combining the inequality (\ref{xichmack}) and Theorem \ref{blockHerz}, and estimating as the final part in the proof of Theorem \ref{matrix_Hardy-Herz} above, we also obtain that
\begin{align}
\|H_{\Phi,A}(a)\|_{{\mathop{K}\limits^{.}}^{\alpha,p}_{q}(\omega_1,\omega_2)}\lesssim \Big(\sum\limits_{k\in\mathbb Z} |\eta_k|^p\Big)^{\frac{1}{p}}\lesssim \left\{ \begin{array}{l}
\mathcal C_9,\,\,p=1,
\\
\mathcal C_{10},\,p\in (0,1)\,\textit{\rm and}\,\,\sigma>\frac{(1-p)}{p},
\end{array} \right.
\nonumber
\end{align}
which shows that the proof of this theorem is finished.
\end{proof}
%%%%%%%%%%%%%%%%%%%
\begin{theorem}\label{matrix_Hardy-Herz2}
Let $1\leq q^*<q <\infty$, $0<\alpha^*<\infty$, $\alpha\in [n(1-1/q),\infty)$, $\omega_i\in A_1$ with the finite critical index $r_{\omega_i}$ for the reverse H\"{o}lder condition and $\delta_i\in (1,r_{\omega_i})$, for all i=1,2. Assume that $q>q^*r_{\omega_2}'$ and the hypothesises (\ref{qalpha}) and (\ref{omega2/omega1}) in Theorem \ref{Hardy-Herz2} hold.
\\
{\rm (i)} If $p=1$ and 
\begin{align}
\mathcal C_{11}&=\int_{\mathbb R^n}\frac{|\Phi(y)|}{|y|^n}|{\rm det} A^{-1}(y)|^{\frac{1}{q}}\|A(y)\|^{\frac{n}{q}}\Big(\chi_{\{\|A^{-1}(y)\|\leq 1\}}(y).\|A^{-1}(y)\|^{\gamma_1}+\nonumber
\\
&\,\,\,\,\,\,\,\,\,\,\,\,\,\,\,\,\,\,\,\,\,\,\,\,\,+\chi_{\{\|A^{-1}(y)\|>1\}}(y).\|A^{-1}(y)\|^{\gamma_2}\Big)dy<\infty,\nonumber
\end{align}
then we have
$
\|H_{\Phi, A}(f)\|_{{\mathop{K}\limits^{.}}^{\alpha^*,1}_{q^*}(\omega_1,\omega_2)}\lesssim \mathcal C_{11}.\|f\|_{H{\mathop{K}\limits^{.}}^{\alpha, 1}_{q}(\omega_1,\omega_2)},\,\,\textit{ for all }\, f\in H{\mathop{K}\limits^{.}}^{\alpha, 1}_{q}(\omega_1,\omega_2).
$
\\
{\rm (ii)} If $0<p<1$, $\sigma>\frac{1-p}{p}$ and
\begin{align}
\mathcal C_{12}&=\int_{\mathbb R^n}\frac{|\Phi(y)|}{|y|^n}|{\rm det} A^{-1}(y)|^{\frac{1}{q}}\|A(y)\|^{\frac{n}{q}}\Big(\chi_{\{\|A^{-1}(y)\|\leq 1\}}(y).\|A^{-1}(y)\|^{\gamma_1}|{\rm log}_2\|A^{-1}(y)\||^{\sigma}+
\nonumber
\\
&\,\,\,\,\,\,\,\,\,\,\,\,\,\,\,\,\,\,+\chi_{\{\|A^{-1}(y)\|>1\}}(y).\|A^{-1}(y)\|^{\gamma_2}\big({\rm log}_2\|A^{-1}(y)\| +1 \big)^{\sigma}\Big)dy<\infty,\nonumber
\end{align}
then we have
$
\|H_{\Phi, A}(f)\|_{{\mathop{K}\limits^{.}}^{\alpha^*,p}_{q^*}(\omega_1,\omega_2)}\lesssim \mathcal C_{12}.\|f\|_{H{\mathop{K}\limits^{.}}^{\alpha, p}_{q}(\omega_1,\omega_2)},\,\,\textit{ for all }\, f\in H{\mathop{K}\limits^{.}}^{\alpha, p}_{q}(\omega_1,\omega_2).
$
\\
Here $\gamma_1,\gamma_2$ are defined in Theorem \ref{Hardy-Herz2}.
\end{theorem}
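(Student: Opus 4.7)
The plan is to combine the strategy of Theorem \ref{Hardy-Herz2} (which handled the jump from $q$ to $q^*<q$ and from $\alpha$ to $\alpha^*$ via the reverse Hölder condition) with the matrix-theoretic decomposition used for Theorems \ref{matrix_Hardy-Hardy}, \ref{matrix_Hardy-Herz}, \ref{matrix_Hardy-Herz1}. By Theorem \ref{apply}, it is enough to establish a uniform bound on central $(\alpha,q,s;\omega_1,\omega_2)_0$-atoms $a$ supported in $B_{j_a}$ with $\|a\|_{L^q(\omega_2)}\lesssim \omega_1(B_{j_a})^{-\alpha/n}$, namely $\|H_{\Phi,A}(a)\|_{{\mathop{K}\limits^{.}}^{\alpha^*,p}_{q^*}(\omega_1,\omega_2)}\lesssim \mathcal C_{11}$ when $p=1$ and $\lesssim \mathcal C_{12}$ when $0<p<1$.

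First I would partition the $y$-integration according to the dyadic size of $\|A^{-1}(y)\|$:
$$
|H_{\Phi,A}(a)(x)|\leq \sum_{k\in\mathbb Z}\int_{2^{k-1}<\|A^{-1}(y)\|\leq 2^k}\frac{|\Phi(y)|}{|y|^n}|a(A(y)x)|dy =: \sum_{k\in\mathbb Z}\widetilde c_k(x).
$$
The argument from Theorem \ref{matrix_Hardy-Hardy}, based on $|A(y)x|\geq \|A^{-1}(y)\|^{-1}|x|$, forces $\operatorname{supp}(\widetilde c_k)\subset B_{j_a+k}$. The crucial step is to estimate $\|\widetilde c_k\|_{L^{q^*}(\omega_2)}$. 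Applying the Minkowski integral inequality to pull $y$ outside, and using $q>q^*r'_{\omega_2}$ to choose $r\in(1,r_{\omega_2})$ with $q=q^*r'$, the Hölder inequality together with the reverse Hölder condition gives
$$
\|a(A(y)\cdot)\|_{L^{q^*}(\omega_2,B_{j_a+k})}\lesssim \|a(A(y)\cdot)\|_{L^q(B_{j_a+k})}|B_{j_a+k}|^{-1/q}\omega_2(B_{j_a+k})^{1/q^*}.
$$
The change of variables $z=A(y)x$ converts $\|a(A(y)\cdot)\|_{L^q(B_{j_a+k})}$ into an unweighted $L^q$ norm of $a$ on $B^*_{\|A(y)\|2^{j_a+k}}$, which by Proposition \ref{pro2.4DFan} and the atom size condition produces a factor $\omega_2(B_{j_a})^{-1/q}\omega_1(B_{j_a})^{-\alpha/n}$. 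Collecting everything yields
$$
\|\widetilde c_k\|_{L^{q^*}(\omega_2)}\lesssim \omega_1(B_{j_a+k})^{-\alpha^*/n}\int_{2^{k-1}<\|A^{-1}(y)\|\leq 2^k}\frac{|\Phi(y)|}{|y|^n}|{\rm det}A^{-1}(y)|^{1/q}\|A(y)\|^{n/q}\mathcal U_{\omega_1,\omega_2,k,j_a}\,dy,
$$
with $\mathcal U_{\omega_1,\omega_2,k,j_a}$ the very same weight ratio that appeared in the proof of Theorem \ref{Hardy-Herz2}.

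At this point I would invoke verbatim the bound (\ref{Uomega1omega2}): hypotheses (\ref{qalpha}) and (\ref{omega2/omega1}), together with Proposition \ref{pro2.3DFan} applied to $\omega_i\in A_1\cap RH_{\delta_i}$, give $\mathcal U_{\omega_1,\omega_2,k,j_a}\lesssim 2^{k\gamma_2}$ for $k\geq 1$ and $\lesssim 2^{k\gamma_1}$ for $k\leq 0$. Since $2^{k-1}<\|A^{-1}(y)\|\leq 2^k$ on the $y$-range, each $2^{k\gamma_i}$ can be replaced by $\|A^{-1}(y)\|^{\gamma_i}$ inside the integral. This gives $\|\widetilde c_k\|_{L^{q^*}(\omega_2)}\leq \omega_1(B_{j_a+k})^{-\alpha^*/n}\eta_k^*$, where $\eta_k^*$ is an integral over $\{2^{k-1}<\|A^{-1}(y)\|\leq 2^k\}$. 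After normalization, each $\widetilde c_k/\eta_k^*$ becomes a dyadic central $(\alpha^*,q^*;\omega_1,\omega_2)$-unit supported in $B_{j_a+k}$, and Theorem \ref{blockHerz} delivers
$$
\|H_{\Phi,A}(a)\|_{{\mathop{K}\limits^{.}}^{\alpha^*,p}_{q^*}(\omega_1,\omega_2)}\lesssim \Bigl(\sum_{k\in\mathbb Z}|\eta_k^*|^p\Bigr)^{1/p}.
$$
For $p=1$ the integrals telescope directly into $\mathcal C_{11}$. For $0<p<1$ and $\sigma>(1-p)/p$, the discrete Hölder trick used at the end of Theorem \ref{Hardy-Herz2} gives $\bigl(\sum|\eta_k^*|^p\bigr)^{1/p}\lesssim\sum|k|^\sigma|\eta_k^*|$, and on each dyadic shell $|k|^\sigma$ is comparable to $|\log_2\|A^{-1}(y)\||^\sigma$ or $(\log_2\|A^{-1}(y)\|+1)^\sigma$, which produces $\mathcal C_{12}$.

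The main obstacle will be the careful bookkeeping of the matrix factors $|{\rm det}A^{-1}(y)|^{1/q}\|A(y)\|^{n/q}$. In Theorems \ref{matrix_Hardy-Herz}--\ref{matrix_Hardy-Herz1} one exploits $\omega_2=|x|^{\beta_2}$ and changes variables directly in the $L^q(\omega_2)$ norm, producing $\|A(y)\|^{-\beta_2/q}|{\rm det}A^{-1}(y)|^{1/q}$; here the weight $\omega_2$ is general, so I must first switch from $L^{q^*}(\omega_2)$ to unweighted $L^q$ via the reverse Hölder condition, change variables in the unweighted $L^q$, and only reintroduce $\omega_2$ via Proposition \ref{pro2.4DFan} and the atom size hypothesis. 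Once the weight ratio $\mathcal U_{\omega_1,\omega_2,k,j_a}$ is correctly isolated, the remainder of the proof is a straightforward transcription of Theorem \ref{Hardy-Herz2} with the scalar quantity $t$ replaced by $\|A^{-1}(y)\|$ in every dyadic decomposition.
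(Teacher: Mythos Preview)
Your proposal is correct and follows essentially the same route as the paper's proof: the dyadic decomposition in $\|A^{-1}(y)\|$, the Minkowski/H\"older/reverse H\"older passage from $L^{q^*}(\omega_2)$ to unweighted $L^q$, the change of variables producing $|{\rm det}A^{-1}(y)|^{1/q}\|A(y)\|^{n/q}$, the isolation of $\mathcal U_{\omega_1,\omega_2,k,j_a}$ and its estimate via (\ref{Uomega1omega2}), and the final block decomposition with Theorem \ref{blockHerz} together with the $|k|^\sigma$ trick for $0<p<1$ all match the paper exactly. Your closing remark about the bookkeeping of matrix factors and the need to change variables in the unweighted norm is precisely the point the paper handles.
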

%%%%%%%%%%%%%%%%%%%%
%%%%%%%%%%%%%%%%%%%%
\begin{proof}
Let fix nonnegative integer $s\geq [\alpha-n(1-1/q)]$. Let $a$ be any central $(\alpha, q, s;\omega_1, \omega_2)_0$-atom with ${\rm supp}(a)\subset B_{j_a}$. In order to prove the theorem, it suffices to show that
\begin{align}\label{matrix_uniform3}
\|H_{\Phi,A}(a)\|_{{\mathop{K}\limits^{.}}^{\alpha^*,p}_{q^*}(\omega_1,\omega_2)}\lesssim \left\{ \begin{array}{l}
\mathcal C_{11},\,\,p=1,
\\
\mathcal C_{12},\,\,p\in (0,1)\,\textit{\rm and}\,\sigma>\frac{(1-p)}{p}.
\end{array} \right.
\end{align}
We first recall that  
$
\widetilde{c}_k(x)=\int_{2^{k-1}<\|A^{-1}(y)\|\leq 2^k} \dfrac{|\Phi(y)|}{|y|^n}.|a(A(y)x)|dy,
$
with ${\rm supp}(\,\widetilde{c}_k)\subset B_{j_a+k}$.
\\
By the Minkowski inequality again, one has
\begin{align}
\|\widetilde{c}_k\|_{L^{q^*}(\omega_2)}&=\Big(\int_{B_{j_a+k}}\Big|\int_{2^{k-1}<\|A^{-1}(y)\|\leq 2^k} \dfrac{\Phi(y)}{|y|^n}.a(A(y)x)dy\Big|^{q^*}\omega_2(x)dx\Big)^{\frac{1}{q^*}}\nonumber
\\
&\leq \int_{2^{k-1}<\|A^{-1}(y)\|\leq 2^k}\frac{|\Phi(y)|}{|y|^n}\Big(\int_{B_{j_a+k}}|a(A(y)x)|^{q^*}\omega_2(x)dx\Big)^{\frac{1}{q^*}}dy.\nonumber
\end{align}
As the first part in the proof of Theorem \ref{Hardy-Herz2}, there exists $r\in (1,r_{\omega_2})$ with $q=q^* r'$. From this, by the H\"{o}lder inequality and the reverse H\"{o}lder condition again, we give
\begin{align}
\Big(\int_{B_{j_a+k}}|a(A(y)x)|^{q^*}\omega_2(x)dx\Big)^{\frac{1}{q^*}} &\leq \Big(\int_{B_{j_a+k}}|a(A(y)x)|^{q}dx\Big)^{\frac{1}{q}} .\Big(\int_{B_{j_a+k}}\omega_2^r(x)dx\Big)^{\frac{1}{rq^*}}\nonumber
\\
&\lesssim |{\rm det} A^{-1}(y)|^{\frac{1}{q}}.\|a\|_{L^q(\|A(y)\|B_{j_a+k})}.|B_{j_a+k}|^{\frac{-1}{q}}\omega_2(B_{j_a+k})^{\frac{1}{q^*}}.\nonumber
\end{align}
Thus,
\begin{align}
&\|\widetilde{c}_k\|_{L^{q^*}(\omega_2)}\lesssim |B_{j_a+k}|^{\frac{-1}{q}}\omega_2(B_{j_a+k})^{\frac{1}{q^*}}.\int_{2^{k-1}<\|A^{-1}(y)\|\leq 2^k}\frac{|\Phi(y)|}{|y|^n}|{\rm det} A^{-1}(y)|^{\frac{1}{q}}.\|a\|_{L^q(\|A(y)\|B_{j_a+k})}dy.\nonumber
\end{align}
In addition, by Proposition \ref{pro2.4DFan} and the inequality $\|a\|_{L^q(\omega_2)}\lesssim \omega_1(B_{j_a})^{\frac{-\alpha}{n}}$, we have
\begin{align}
\|a\|_{L^q(\|A(y)\|B_{j_a+k})}&\lesssim |\,\|A(y)\| B_{j_a+k}|^{\frac{1}{q}}\omega_2(\|A(y)\|B_{j_a+k})^{\frac{-1}{q}}\|a\|_{L^q(\omega_2, \|A(y)\|B_{j_a+k})}
\nonumber
\\
&\lesssim |\,\|A(y)\|B_{j_a+k}|^{\frac{1}{q}}\omega_2(\|A(y)\|B_{j_a+k})^{\frac{-1}{q}}\omega_1(B_{j_a})^{\frac{-\alpha}{n}}.\nonumber
\end{align}
Consequently, by $\Big(\frac{|\,\|A(y)\|B_{j_a+k}|}{|B_{j_a+k}|}\Big)^{\frac{1}{q}}\simeq \|A(y)\|^{\frac{n}{q}}$, it implies that
\begin{align}
\|\widetilde{c}_k\|_{L^{q*}(\omega_2)} &\lesssim \int_{2^{k-1}<\|A^{-1}(y)\|\leq 2^k}\frac{|\Phi(y)|}{|y|^n}|{\rm det} A^{-1}(y)|^{\frac{1}{q}}\|A(y)\|^{\frac{n}{q}}\omega_2(B_{j_a+k})^{\frac{1}{q^*}}\omega_2(\|A(y)\|B_{j_a+k})^{\frac{-1}{q}}\omega_1(B_{j_a})^{\frac{-\alpha}{n}}dy.\nonumber
\end{align}
Because of having $\| A(y)\|\geq \|A^{-1}(y)\|^{-1}$, by letting $\|A^{-1}(y)\|\in (2^{k-1}, 2^k]$, we infer 
$$
\omega_2(\|A(y)\|B_{j_a+k})^{\frac{-1}{q}}\leq \omega_2(B_{j_a})^{\frac{-1}{q}}.
$$
Thus,
\begin{align}
\|\widetilde{c}_k\|_{L^{q*}(\omega_2)}\lesssim \omega_1(B_{j_a+k})^{\frac{-\alpha^*}{n}}.\int_{2^{k-1}<\|A^{-1}(y)\|\leq 2^k}\frac{|\Phi(y)|}{|y|^n}|{\rm det} A^{-1}(y)|^{\frac{1}{q}}\|A(y)\|^{\frac{n}{q}}.{\mathcal U}_{\omega_1,\omega_2,k,j_a}dy.\nonumber
\end{align}
Here, ${\mathcal U}_{\omega_1,\omega_2,k,j_a}$ is defined as in Theorem \ref{Hardy-Herz2}.
By the inequality  (\ref{Uomega1omega2}), we have
\begin{align}
\|\widetilde{c}_k\|_{L^{q^*}(\omega_2)}&\lesssim \omega_1(B_{j_a+k})^{\frac{-\alpha^*}{n}}.
\left\{ \begin{array}{l}
\int_{2^{k-1}<\|A^{-1}(y)\|\leq 2^k}\frac{|\Phi(y)|}{|y|^n}|{\rm det} A^{-1}(y)|^{\frac{1}{q}}\|A(y)\|^{\frac{n}{q}}\|A^{-1}(y)\|^{\gamma_2}dy,
\\
\textit{\rm if}\,\,k\geq 1,
\\
\int_{2^{k-1}<\|A^{-1}(y)\|\leq 2^k}\frac{|\Phi(y)|}{|y|^n}|{\rm det} A^{-1}(y)|^{\frac{1}{q}}\|A(y)\|^{\frac{n}{q}}\|A^{-1}(y)\|^{\gamma_1}dy,
\\
\textit{\rm otherwise},
\end{array} \right.
\nonumber
\\
&:= \omega_1(B_{j_a+k})^{\frac{-\alpha^*}{n}}.{\eta^*}_k.\nonumber
\end{align}
Now, we define $\widetilde{c}^{**}_{j_a,k}$ as follows
$
\widetilde{c}^{**}_{j_a,k} = \left\{ \begin{array}{l}
\dfrac{\widetilde{c}_k}{{\eta^*}_k},\,\textit{\rm if }\, \eta^*_k\neq 0,
\\
0,\,\,\,\,\,\,\textit{\rm otherwise}.\nonumber
\end{array} \right.
$
\\
It is obvious that
$
\sum\limits_{k\in\mathbb Z} \widetilde{c}_k= \sum\limits_{k\in\mathbb Z}\eta^*_k.\widetilde{c}^{**}_{j_a,k},
$
where $\widetilde{c}^{**}_{j_a,k}$ is a dyadic central $(\alpha^*, q^*,\omega_1,\omega_2)$-unit. Thus, by the inequality (\ref{xichmack}) and Theorem \ref{blockHerz} again, we immediately obtain
$
\|H_{\Phi,A}(a)\|_{{\mathop{K}\limits^{.}}^{\alpha^*,p}_{q^*}(\omega_1,\omega_2)}\lesssim\Big(\sum\limits_{k\in\mathbb Z} |\eta^*_k|^p\Big)^{\frac{1}{p}}.
$ By the similar proof as the final part of Theorem \ref{matrix_Hardy-Herz} above, we also obtain that 
\begin{align}
&\Big(\sum\limits_{k\in\mathbb Z} |\eta^*_k|^p\Big)^{\frac{1}{p}}\lesssim \left\{ \begin{array}{l}
\mathcal C_{11},\,\,p=1,
\\
\mathcal C_{12},\,\,p\in (0,1)\,\textit{\rm and}\,\,\sigma>\frac{(1-p)}{p}.
\end{array} \right.\nonumber
\end{align}
Consequently, the inequality (\ref{matrix_uniform3}) is valid, and hence, the proof of the theorem is completed.
\end{proof}

\end{document}